\newcommand{\Aut}{\operatorname{Aut}}
\newcommand{\C}{{\mathbb C}}
\newcommand{\const}{\operatorname{const.}}
\newcommand{\diam}{\operatorname{diam}}
\newcommand{\dvol}{\operatorname{dvol}}
\newcommand{\GL}{\operatorname{GL}}
\newcommand{\HH}{\operatorname{H}}
\newcommand{\Id}{\operatorname{Id}}
\newcommand{\N}{{\mathbb N}}
\newcommand{\nil}{\operatorname{nil}}
\newcommand{\Nil}{\operatorname{Nil}}
\newcommand{\OO}{\operatorname{O}}
\newcommand{\R}{{\mathbb R}}
\newcommand{\Ric}{\operatorname{Ric}}
\newcommand{\Rm}{\operatorname{Rm}}
\newcommand{\SL}{\operatorname{SL}}
\newcommand{\SO}{\operatorname{SO}}
\newcommand{\Tr}{\operatorname{Tr}}
\newcommand{\vol}{\operatorname{vol}}
\newcommand{\Z}{{\mathbb Z}}
\numberwithin{equation}{section}
\theoremstyle{plain}
\newtheorem{theorem}[equation]{Theorem}
\newtheorem{proposition}[equation]{Proposition}
\newtheorem{corollary}[equation]{Corollary}
\theoremstyle{remark}
\newtheorem{remark}[equation]{Remark}
\newtheorem{example}[equation]{Example}
\begin{document}

\title[Collapsing geometry]
{The collapsing geometry of almost Ricci-flat $4$-manifolds}

\author{John Lott}
\address{Department of Mathematics\\
University of California, Berkeley\\
Berkeley, CA  94720-3840\\
USA} \email{lott@berkeley.edu}

\thanks{Research partially supported by NSF grant
DMS-1510192}
\date{March 20, 2020}
\begin{abstract}
  We consider Riemannian
  $4$-manifolds that Gromov-Hausdorff converge to
  a lower dimensional limit space, with the Ricci tensor going to zero.
  Among other things, we show that
  if the limit space is
  two dimensional then under some mild assumptions, the
  limiting four dimensional geometry away from the curvature blowup region is
  semiflat K\"ahler.
\end{abstract}

\maketitle


\section{Introduction} \label{sect1}

When considering Einstein manifolds, or almost Einstein manifolds, the
four dimensional case is especially interesting.  This paper is about
almost Ricci-flat $4$-manifolds, meaning compact
$4$-manifolds $M$ that admit a sequence of Riemannian metrics
$\{g_i\}_{i=1}^\infty$ with
$\lim_{i \rightarrow \infty} \| \Ric(M, g_i) \|_\infty
\cdot \diam(M, g_i)^2 = 0$.
Special cases come from 
Ricci-flat $4$-manifolds.
The known examples of the latter are finitely covered by a flat
torus or by a Ricci-flat K\"ahler metric on a $K3$ manifold.
There are almost Ricci-flat $4$-manifolds
that do not admit Ricci-flat metrics \cite{Anderson (1992)}.

Fixing an upper diameter bound for $\{(M, g_i)\}_{i=1}^\infty$,
one can divide the study of almost Ricci-flat $4$-manifolds into the
noncollapsed case, where there is a definite positive lower volume bound, and
the collapsing case, where the volume goes to zero.
In the noncollapsed case, a Gromov-Hausdorff limit
(as the Ricci curvature goes to zero) is a four dimensional Ricci-flat
orbifold  with isolated orbifold points, as follows from
work of Anderson \cite{Anderson (1990)}, Bando-Kasue-Nakajima
\cite{Bando-Kasue-Nakajima (1989)} and Tian
\cite{Tian (1990)}.
The orbifold points are caused by
noncompact Ricci-flat ALE manifolds (or orbifolds) that bubble off.
There is a bubble tree description of the sequence
\cite{Anderson-Cheeger (1991),Bando (1990)}.
In \cite{Kapovitch-Lott (2016)},
sufficient topological conditions were given 
for a noncollapsed almost Ricci-flat $4$-manifold to admit a Ricci-flat
metric.
There are probably also noncollapsed almost Ricci-flat $4$-manifolds that do
not admit Ricci-flat metrics
\cite{Brendle-Kapouleas (2017)}.

In the collapsing case, fundamental work was done by Cheeger and Tian
\cite{Cheeger-Tian (2006)}. Allowing the manifolds to vary,
 let $\{(M_i, g_i)\}_{i=1}^\infty$ be a sequence of 
  compact connected orientable
  Riemannian $4$-manifolds so that for some $C \in \N$ and $D < \infty$,
  \begin{itemize}
    \item $\chi(M_i) \le C$ for all $i$,
\item $\diam(M_i, g_i) \le D$ for all $i$, 
\item $\lim_{i \rightarrow \infty} \|\Ric(M_i, g_i)\|_\infty = 0$ and
  \item $\lim_{i \rightarrow \infty} \vol(M_i, g_i) = 0$.
  \end{itemize}
  After passing to a subsequence, we can assume that
  $\lim_{i \rightarrow \infty} (M_i, g_i) = (X, d_X)$ in the Gromov-Hausdorff
  topology, for some compact
  metric space $X$ whose Hausdorff dimension is less than four.
  As we will review in Subsection \ref{subsect3.1},
  Cheeger and Tian
 showed that for large $i$,
  each $(M_i, g_i)$
  has a small ``curvature blowup''
  region where the curvature concentrates in an $L^2$-sense,
  and a ``regular'' region with {\it a priori} curvature bounds.
    We can assume that the regular regions converge to a subset
  $X_{reg} \subset X$, whose complement in $X$ is a finite set. In particular,
  if $B$ is a connected component of $X_{reg}$ then
  taking the metric completion of $B$ amounts to adding a finite number of points.

  We are interested in the four dimensional geometry of the regular regions.
  From work of Cheeger, Fukaya and Gromov, culminating in
  \cite{Cheeger-Fukaya-Gromov (1992)}, collapsing
  regions with bounded curvature acquire continuous symmetries in the limit.
  (The results from \cite{Cheeger-Fukaya-Gromov (1992)} were localized in
  \cite[Section 2]{Cheeger-Tian (2006)}.)
  A convenient language to formalize the collapsing
  limit, with its symmetries, is that of
  Riemannian groupoids, as described in \cite[Section 5]{Lott (2007)}.
  A brief introduction to the use of Riemannian groupoids in collapsing theory
  is in \cite[Section 3]{Lott (2010)}.
  Passing to a subsequence, we can assume
  that the regular regions, approaching $B$, also converge
  in the sense of Riemannian groupoids, to
  a four dimensional smooth Ricci-flat
  Riemannian groupoid ${\mathcal X}$ whose orbit
  space is $B$.

  To state the main result of this paper, we recall that
  a (possibly incomplete) connected
Riemannian manifold is {\em parabolic}
if any 
$C^2$-regular function $f$ that is bounded above, and satisfies
$\triangle f \ge 0$, is constant. (If the manifold has boundary then we
require $f$ to vanish on the boundary.)
Some equivalent conditions for parabolicity are given in
\cite[Theorem 5.1]{Grigor'yan (1999)}.
There is a similar definition for Riemannian orbifolds.

For example,
the complement of a finite number of points in a closed Riemannian manifold,
of dimension greater than one,
is parabolic \cite[Corollary 5.4]{Grigor'yan (1999)}.
Whether or not a two dimensional Riemannian manifold is parabolic
only depends on its underlying conformal structure.

Let $\sqrt{\det G}$ denote the
relative volume function of the orbits of ${\mathcal X}$,
a function on $B$.
(For example, if ${\mathcal X}$ comes from a free torus action
then $\sqrt{\det G}$ describes the relative volumes
of the torus orbits.)

\begin{theorem} \label{thm1.1}
  \begin{enumerate}
  \item If $\dim(B) = 3$ then $B$ is an orbifold.  If $B$ is parabolic
    and $\sqrt{\det G}$ is bounded above then $B$ is flat and
    the $4$-dimensional geometry of ${\mathcal X}$ is flat.
  \item If $\dim(B) = 2$ then $B$ is an orbifold-with-boundary.
    If $B$ is parabolic and $\sqrt{\det G}$ is bounded above
    then $B$ is boundaryless with nonnegative scalar curvature,
    and the $4$-dimensional geometry of ${\mathcal X}$ is
    semiflat K\"ahler.
  \item If $\dim(B) = 1$ then $B$ is a circle or an interval.
    The $4$-dimensional
    geometry of ${\mathcal X}$, over the interior of $B$, is flat,
    or a Riemannian Kasner geometry, or a
    Riemannian Bianchi-II geometry.
  \item If $\dim(B) = 0$ then $B$ is a point and
    the $4$-dimensional geometry of ${\mathcal X}$ is flat.
    \end{enumerate}
  \end{theorem}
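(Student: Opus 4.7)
The plan is to run a common Bochner-type argument in each case. The localized Cheeger--Fukaya--Gromov theory equips $\mathcal{X}$ locally with an isometric action of a nilpotent Lie algebra whose orbits are the collapsing fibers; for Ricci-flat $4$-manifolds with such low-dimensional quotients the action turns out to be abelian in cases (1) and (2). For any Killing field $Y$ on a Ricci-flat manifold,
$$\triangle |Y|^2 = 2|\nabla Y|^2 - 2\Ric(Y,Y) = 2|\nabla Y|^2 \ge 0,$$
so $|Y|^2$ is subharmonic, and an analogous computation yields subharmonicity of $\sqrt{\det G}$ on $B$. Parabolicity together with the upper bound hypothesis will then force $\sqrt{\det G}$ to be constant, and this is the engine of each rigidity statement. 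The orbifold structure on $B$ in cases (1) and (2) is extracted from the isotropy of the groupoid action using the bounded-curvature estimates from Subsection 3.1.

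\emph{Case (1).} With $\dim B = 3$ the collapsing direction is generated by a single Killing field $Y$ with $|Y|=\sqrt{\det G}$. Constancy gives $|\nabla Y|\equiv 0$, so $Y$ is parallel. A parallel Killing field locally splits $\mathcal{X}$ as a metric product of a line with a transverse $3$-manifold which is Ricci-flat, hence flat in dimension three. Thus $\mathcal{X}$ is flat and $B$ inherits a flat orbifold structure from the quotient.

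\emph{Case (2).} With $\dim B = 2$ one has locally two commuting Killing fields $Y_1,Y_2$. A direct computation exploiting both Killing equations and $\Ric=0$ shows $\triangle \sqrt{\det G}\ge 0$, so parabolicity and boundedness force $\sqrt{\det G}$ constant. This implies that the orbits are totally geodesic flat tori and that the horizontal distribution is integrable. Combining the dual coframe $(Y_1^\flat, Y_2^\flat)$ on the fibers with a natural complex structure on the base yields a parallel almost complex structure on $\mathcal{X}$ -- precisely the semiflat K\"ahler ansatz. O'Neill-type submersion identities for K\"ahler submersions then express the scalar curvature of $B$ as a nonnegative combination of curvature and $A$-tensor terms, giving the stated sign. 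Orbifold boundary points of $B$ correspond to $\sqrt{\det G}\to 0$, which a maximum-principle argument rules out under parabolicity and the upper bound.

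\emph{Case (3).} With $\dim B = 1$ the fibers are $3$-dimensional simply-transitive homogeneous Riemannian spaces, locally modeled on a $3$-dimensional Lie group. Writing the vacuum Einstein equations in the Bianchi frame reduces the problem to an ODE system along the transverse coordinate, and a case-by-case check across the Bianchi types shows that the only Ricci-flat $4$-geometries that arise are flat, Kasner (Bianchi I), and Bianchi-II.

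\emph{Case (4).} With $B$ a point the groupoid action is locally transitive, so $\mathcal{X}$ is locally homogeneous, and the Alekseevsky--Kimelfeld theorem then gives that $\mathcal{X}$ is flat.

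The hard part is case (2): promoting constancy of the orbit area to the full semiflat K\"ahler conclusion requires constructing a global parallel complex structure from the collapsing data and carefully identifying the base curvature via the O'Neill-type formulas, as well as ruling out orbifold boundary. The Bochner/parabolicity step is routine by comparison, and cases (1), (3), (4) reduce to classical homogeneous-space and Bianchi classifications.
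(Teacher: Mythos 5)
Your overall skeleton does match the paper's: the engine is the elliptic identity $\triangle \sqrt{\det G} = \left( \tfrac14 |F|^2 - \lambda N \right) \sqrt{\det G}$ on $B$ (equation (\ref{2.9})), so that parabolicity plus the upper bound forces $\sqrt{\det G}$ constant, hence $F=0$ and $\lambda = 0$; cases (1) and (4) then go through essentially as you describe, and your exclusion of $\partial B$ via $\det G \to 0$ at boundary orbits is also the paper's argument (Propositions \ref{prop3.10}--\ref{prop3.12}). But your treatment of case (2), which you yourself flag as the hard part, contains a genuine error and omits the key step. Constancy of $\det G$ makes the torus orbits \emph{minimal}, not totally geodesic: the full second fundamental form is governed by $\nabla G_{IJ}$, and only its trace is controlled by $\nabla \ln \det G$. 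In the model examples (e.g.\ Gross--Wilson) the period $\tau$ varies over $B$, so $G_{IJ}$ is nonconstant and the fibers are not totally geodesic; were they totally geodesic (with $F=0$ already established), the metric would be a local product, forcing $B$ flat and collapsing the conclusion to a triviality that contradicts (\ref{form}). The actual route to ``semiflat K\"ahler'' is: after $\det G = \const$ and $F=0$, the fiber Ricci equation says that $G$, viewed as a $\rho$-equivariant map $\widetilde{B} \to \SL(2,\R)/\SO(2) \cong H^2$, is harmonic; then the \emph{base} Ricci equation in dimension two (where $R_{\alpha\beta} = \tfrac12 R g_{\alpha\beta}$ automatically) yields $G^* g_{H^2} = 2R\, g_B$, i.e.\ the harmonic map is weakly conformal, hence $\pm$holomorphic (Proposition \ref{prop2.17}). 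Holomorphy of the period map is precisely the semiflat K\"ahler condition, and $\Ric_B = \tfrac14 \widetilde{G}^* g_{H^2} \ge 0$ gives the sign of the scalar curvature directly; no O'Neill identities are needed, and an O'Neill computation premised on totally geodesic fibers would wrongly give $R=0$. Your sketch never produces the holomorphy of $\tau$, so the stated conclusion does not follow from it.

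A secondary gap is in case (3): the Cheeger--Fukaya--Gromov structure is \emph{nilpotent}, so the only three-dimensional fiber algebras that can occur are $\R^3$ and the Heisenberg algebra. A ``case-by-case check across the Bianchi types'' without invoking this restriction would not yield your list, since Ricci-flat Riemannian metrics of other Bianchi types (e.g.\ type IX) certainly exist. With nilpotency in hand, the abelian case reduces to the ODE analysis of Proposition \ref{prop2.19}, giving the flat and Kasner alternatives, and the Heisenberg case gives Bianchi II.
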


The notion of a semiflat K\"ahler metric is given in
\cite[Section 3]{Gross-Tosatti-Zhang (2013)},
\cite[Section 3.2]{Hein (2012)}, \cite{Hein-Tosatti (2015)}
and
\cite[Section 3.1]{Tosatti-Zhang (2017)}, among other places.
The Riemannian Kasner geometry and the
Riemannian Bianchi-II geometry are defined in
Subsection \ref{subsect3.4}.
Appropriate sequences of
Ricci-flat $K3$ manifolds give examples of parts
(1), (2) and (3) of Theorem \ref{thm1.1}; see Examples
\ref{ex3.5}, \ref{ex3.8} and \ref{addedexample}.
The constructions in \cite{Anderson (1992)} give further
examples.

Theorem \ref{thm1.1} can be viewed in two ways.
On the one hand, it gives some explanation for the geometry of the
regular regions
seen in the known almost Ricci-flat examples, and indicates what
other examples may exist.  On the other hand, it
shows what restrictions would have to be lifted in order to find
exotic examples.

In the setting of Theorem \ref{thm1.1}, if $B$ is not a point then
the local symmetry Lie algebra
of ${\mathcal X}$ must be $\nil^3$ or $\R^N$, where $1 \le N \le 3$.
The $\nil^3$ case can be handled separately, so the main task in
proving Theorem \ref{thm1.1} is to analyze the Ricci-flat equations
on a manifold with a local $\R^N$-symmetry.

More generally, we look at Einstein manifolds of arbitrary dimension
with a local $\R^N$-symmetry. In the case of a locally free action,
the Riemannian metric gives a
distribution that is transverse to the local orbits of the $\R^N$-action.
One interesting feature is that when
the quotient space is two dimensional, there are conserved quantities that,
under suitable topological conditions, force the 
distribution to be integrable; see Corollary \ref{cor2.16}.

\subsection{Earlier work}

Einstein manifolds with symmetries have been considered in many
papers, including
\cite{Calderbank-Pedersen (2002),Dammerman (2004),Wang (2012)}.

In \cite{Naber-Tian}, Naber and Tian looked at collapsing sequences
of manifolds having bounded diameter and bounded curvature, with
the Ricci tensor going to zero.  (In the four dimensional case,
this corresponds to not having any curvature blowup regions.)  They showed
that the Gromov-Hausdorff limit is a Ricci-flat orbifold.
Instead of Riemannian groupoids, they used a notion of
$N^*$-bundles.  The argument used a formula for
$\triangle \ln \det G$, along with the maximum
principle; compare with (\ref{2.9}).

In work in progress, Cheeger and Tian use the framework of
\cite{Cheeger-Tian (2006)} to study
finite-volume complete Einstein $4$-manifolds with negative
Einstein constant and ends that are asymptotic to rays.
They use the collapsing structure at infinity to identify the possible 
model geometries (real or complex hyperbolic cusps) and show that
along an end, a model
geometry is indeed asymptotically approached.

\subsection{Structure of the paper}

In Section \ref{sect2} we analyze Ricci-flatness for the total space
of a (twisted) principal bundle with abelian structure group.
The proof of Theorem \ref{thm1.1} is given in Section \ref{sect3}.
In fact, we prove the conclusions of
Theorem \ref{thm1.1} when the upper diameter
bound is replaced by an upper volume bound, and the
volume is only assumed to go to zero in the local sense of
(\ref{neweqn}). With these more general assumptions, we
have to introduce basepoints, which is why we only discuss
the bounded diameter case in this introduction.

More detailed descriptions are given at the beginnings of the sections.

I thank Jeff Cheeger, Hans-Joachim Hein, Claude LeBrun,
Gang Tian and Jeff Viaclovsky for helpful
comments. I also thank the referee for pointing out a mistake in an earlier
version of the paper.

\section{Isometric free local $\R^N$-actions} \label{sect2}

In this section we consider a (twisted) principal bundle
with abelian structure group and an adapted Riemannian metric.
In Subsection \ref{subsect2.1} we define the relevant bundles and
metrics, and give the formula for
the Ricci curvature of the total space. In Subsection \ref{subsect2.2}
we show how the Ricci-flat condition simplifies when the base is parabolic and
the fiberwise volume forms are relatively bounded.  Subsection
\ref{subsect2.3} gives the consequences when the fibers are
one dimensional.  In Subsection \ref{subsect2.4} we discuss the
conserved quantities that arise when the base is two dimensional,
and show what the results of Subsection \ref{subsect2.2} become in the
two dimensional case. Finally,
Subsection \ref{subsect2.5} is about a one dimensional base.

The results of this section extend directly to the case when the
base is an orbifold. We use the Einstein summation convention freely.

\subsection{Ricci curvature equation} \label{subsect2.1}

Let ${\mathcal G}$ be a Lie group, with Lie algebra ${\frak g}$.
Let $\Aut({\mathcal G})_\delta$ denote
the automorphism group of ${\mathcal G}$, with the discrete topology, and let
${\mathcal G} \rtimes \Aut({\mathcal G})_\delta$ denote
the semidirect product.
If $B$ is a connected smooth manifold, let $P \rightarrow B$ be
a principal ${\mathcal G} \rtimes \Aut({\mathcal G})_\delta$-bundle.
From the homomorphism
${\mathcal G} \rtimes \Aut({\mathcal G})_\delta \rightarrow
\Aut({\mathcal G})_\delta$,
there is a corresponding
principal bundle 
${\frak F}$ on $B$ with discrete structure group $\Aut({\mathcal G})_\delta$.
From the action of ${\mathcal G} \rtimes \Aut({\mathcal G})_\delta$
on ${\mathcal G}$,
there is also
a fiber bundle ${\frak E}$ on $B$ associated to $P \rightarrow B$,
with fiber ${\mathcal G}$. 
One can think of ${\frak E}$ as an ${\frak F}$-twisted
${\mathcal G}$-principal bundle, in the sense that ${\frak E}$ has
free local ${\mathcal G}$-actions that are globally twisted by
${\frak F}$.
In addition,
there is an flat vector bundle $e$ on $B$ associated to ${\frak F}$,
  with fiber ${\frak g}$. 

  In what follows, we will be interested in the case
  when ${\mathcal G}$ is an $N$-dimensional connected
  abelian Lie group with $N \ge 1$.
(An example is when ${\mathcal G} = T^N$, ${\frak F}$ is a trivial principal
$\GL(N, \Z)$-bundle
and ${\frak E}$ is a principal $T^N$-bundle on $B$.)
Let $M$ be the total space of 
${\frak E}$. We write $\dim(B) = n$ and $\dim(M) = m = N+n$.

Let
$\overline{g}$ be a Riemannian metric on $M$ with a
free local isometric ${\mathcal G}$-action (globally twisted by ${\frak F}$).
In adapted local coordinates, we can write
\begin{equation} \label{2.1} 
\overline{g} \: = \: \sum_{I,J=1}^N G_{IJ} \: (dx^I + A^I) (dx^J + A^J) \: + \:
\sum_{\alpha, \beta = 1}^{n} g_{\alpha \beta} \: db^\alpha db^\beta.
\end{equation}
Here the $x^I$'s are linear local coordinates on the fibers of
$M \rightarrow B$,
$(G_{IJ})$ is the local expression of a Euclidean inner product on
$e$,
$\sum_{\alpha, \beta = 1}^{n} g_{\alpha \beta} \: db^\alpha db^\beta$ is
the local expression of a metric $g_B$ on $B$ and
$A^I = \sum_{\alpha} A^I_\alpha db^\alpha$ are the components of
a local $e$-valued $1$-form describing an connection $A$ on the
twisted ${\mathcal G}$-bundle $M \rightarrow B$.

Put $F^I_{\alpha \beta} = \partial_\alpha A^I_\beta - 
\partial_\beta A^I_\alpha$.
At a given point $b \in B$, we can assume that $A^I(b) = 0$.
We write
\begin{equation} \label{2.2}
G_{IJ;\alpha \beta} \: = \: G_{IJ,\alpha \beta} \: - \:
\Gamma^{\sigma}_{\: \: \alpha \beta} \: G_{IJ, \sigma},
\end{equation}
where $\{\Gamma^{\sigma}_{\: \: \alpha \beta}\}$ are the
Christoffel symbols for the metric $g_{\alpha \beta}$ on $B$.

From \cite[Section 4.2]{Lott (2010)},
the Ricci tensor of ${\overline g}$ on $M$
is given in terms of the curvature tensor
$R_{\alpha \beta \gamma \delta}$ of $B$, the $2$-forms $F^I_{\alpha \beta}$
and the metrics $G_{IJ}$ by
\begin{align} \label{2.3} 
\overline{R}_{IJ}^{\overline{g}} 
\:  =  \: & - \: \frac12 \: g^{\alpha \beta} \: 
G_{IJ; \alpha \beta} \: - \: \frac14 \: g^{\alpha \beta} \:
G^{KL} \: G_{KL, \alpha} \: G_{IJ, \beta} \: + \:
\frac12 \: g^{\alpha \beta} \: G^{KL} \: G_{IK, \alpha} \:
G_{LJ, \beta} \: + \\
&  \frac14 \: g^{\alpha \gamma} \: g^{\beta \delta} \:
G_{IK} \: G_{JL} \: F^K_{\alpha \beta} \: F^L_{\gamma \delta} \notag \\
\overline{R}_{I \alpha}^{\overline{g}}
 \:  =  \: & \frac12 \: g^{\gamma \delta} \:
G_{IK} \: F^K_{\alpha \gamma; \delta} \: + \: \frac12 \: g^{\gamma \delta} \:
G_{IK, \gamma} \: F^K_{\alpha \delta} \: + \: \frac14 \:
g^{\gamma \delta} \: G_{Im} \: G^{KL} \: G_{KL, \gamma} \: F^m_{\alpha \delta} 
\notag \\
\overline{R}_{\alpha \beta}^{\overline{g}} 
\:  =  \: & R_{\alpha \beta}^g \: - \: 
\frac12 \: G^{IJ} \: G_{IJ; \alpha \beta} \: + \: \frac14 \:
G^{IJ} \: G_{JK,\alpha} \: G^{KL} \: G_{LI,\beta} \: - \:
\frac12 \: g^{\gamma \delta} \: \: G_{IJ} \: F^I_{\alpha \gamma} \:
F^J_{\beta \delta}. \notag
\end{align}
The scalar curvature is
\begin{align} \label{2.4}
\overline{R}^{\overline{g}} \: = \: & 
R^g \: - \: g^{\alpha \beta} G^{IJ}  \: G_{IJ; \alpha \beta} \: + \: 
\frac34 \: g^{\alpha \beta} \: G^{IJ} \: G_{JK, \alpha} \: G^{KL} \:
G_{LI, \beta} \\
& \: - \: \frac14 \: g^{\alpha \beta} \: G^{IJ} \: 
G_{IJ, \alpha} \: G^{KL} \: G_{KL, \beta} \: - 
 \: \frac14 \:
g^{\alpha \gamma} \: g^{\beta \delta} \: G_{IJ} \:
F^I_{\alpha \beta} \: F^J_{\gamma \delta}. \notag
\end{align}

In the rest of this section we will assume that the flat vector bundle $e$ has
holonomy in $\det^{-1}(\pm 1) \subset \GL(N, \R)$,
so that $\det G$ is globally defined
on $B$.

\subsection{General characterization of Ricci-flat metrics} \label{subsect2.2}

Suppose that $\Ric(M, \overline{g}) = \lambda \overline{g}$ for some
$\lambda \in \R$. Then the first equation in (\ref{2.3}) gives
\begin{align} \label{2.5}
N\lambda = & - \: \frac12 \: g^{\alpha \beta} \: 
G^{IJ} \: G_{IJ; \alpha \beta} \: - \: \frac14 \: g^{\alpha \beta} \:
G^{KL} \: G_{KL, \alpha} \: G^{IJ} \: G_{IJ, \beta} \: + \\
& \frac12 \: g^{\alpha \beta} \: G^{JI} \: G_{IK, \alpha} \:  G^{KL} \: 
G_{LJ, \beta} \: + 
\frac14 \: |F|^2, \notag
\end{align}
where
\begin{equation} \label{2.6}
|F|^2 = g^{\alpha \gamma} \: g^{\beta \delta} \:
G_{IJ} \: F^I_{\alpha \beta} \: F^J_{\gamma \delta}.
\end{equation}
Now
\begin{equation} \label{2.7}
  \nabla_\alpha (\det G)^\frac12 = \frac12 (\det G)^{\frac12}
  G^{IJ} G_{IJ;\alpha}
\end{equation}
and 
\begin{align} \label{2.8}
&  \triangle (\det G)^{\frac12} =  (\det G)^{\frac12} \cdot \\
&  \left( \frac12 g^{\alpha \beta} G^{IJ} G_{IJ;\alpha \beta} -
  \frac12 g^{\alpha \beta} G^{JI} G_{IK;\alpha} G^{KL} G_{LJ;\beta} +
  \frac14 g^{\alpha \beta} G^{KL} G_{KL;\alpha} G^{IJ} G_{IJ;\beta} \right).
  \notag
\end{align}
Thus (\ref{2.5}) becomes
\begin{equation} \label{2.9}
  \triangle \sqrt{\det G} = \left( \frac14 |F|^2 - \lambda N
  \right) \sqrt{\det G}.
\end{equation}

Recall the notion of a parabolic Riemannian manifold from the introduction.
Information about parabolic Riemannian manifolds is in
\cite[Section 5]{Grigor'yan (1999)}.

\begin{proposition} \label{prop2.10}
  If
\begin{itemize}
\item  $B$ is parabolic,
\item $\lambda \le 0$ and
\item $\det G$ is bounded above
\end{itemize}
then
\begin{enumerate}
\item $\det G$ is constant,
\item $F = 0$,
\item $\lambda = 0$ and
\item $g^{\alpha \beta} \: 
  G_{IJ; \alpha \beta} \: - \:
   g^{\alpha \beta} \: G^{KL} \: G_{IK, \alpha} \:
G_{LJ, \beta} = 0$.
  \end{enumerate}
\end{proposition}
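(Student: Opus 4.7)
The plan is to read off all four conclusions from equation (\ref{2.9}) together with the first line of (\ref{2.3}), using parabolicity as the only nontrivial analytic tool.

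First I would observe that under the three hypotheses, the right-hand side of (\ref{2.9}) is nonnegative: $|F|^2 \ge 0$ by definition, and $-\lambda N \ge 0$ since $\lambda \le 0$ and $N \ge 1$. Since $G_{IJ}$ is positive definite, the function $\sqrt{\det G}$ is a strictly positive, bounded-above function on $B$ satisfying $\triangle \sqrt{\det G} \ge 0$. The assumption that $B$ is parabolic then forces $\sqrt{\det G}$ to be constant, giving conclusion (1). (For this step it is important that $\det G$ is globally defined on $B$, which is the role of the holonomy assumption stated at the end of Subsection~\ref{subsect2.1}.)

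With $\sqrt{\det G}$ constant, the left-hand side of (\ref{2.9}) vanishes, so $\tfrac14 |F|^2 \sqrt{\det G} = \lambda N \sqrt{\det G}$. Dividing by the positive quantity $\sqrt{\det G}$ yields $|F|^2 = 4\lambda N$. But $|F|^2 \ge 0$ while $\lambda N \le 0$, so both sides must vanish; this gives $F=0$ and $\lambda = 0$, i.e.\ conclusions (2) and (3).

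For conclusion (4), I would substitute the information just obtained back into the first line of (\ref{2.3}), which under $\Ric(M,\overline{g}) = \lambda \overline{g} = 0$ reads $\overline{R}_{IJ}^{\overline{g}} = 0$. Since $\det G$ is constant, $G^{KL} G_{KL,\alpha} = \partial_\alpha \ln \det G = 0$, so the second term on the right of the $\overline{R}_{IJ}$ equation drops out; since $F=0$, the fourth term drops out as well. What remains is
\begin{equation*}
0 \: = \: - \frac12 g^{\alpha\beta} G_{IJ;\alpha\beta} \: + \: \frac12 g^{\alpha\beta} G^{KL} G_{IK,\alpha} G_{LJ,\beta},
\end{equation*}
which is exactly conclusion (4). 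The only genuinely substantive step is the application of parabolicity in the first paragraph; everything else is bookkeeping against the formulas (\ref{2.3}) and (\ref{2.9}).
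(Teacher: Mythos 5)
Your proof is correct and follows essentially the same route as the paper's: apply parabolicity to the subharmonic, bounded, nonnegative function $\sqrt{\det G}$ to get constancy, deduce $F=0$ and $\lambda=0$ from the vanishing of the right-hand side of (\ref{2.9}), and substitute back into the first equation of (\ref{2.3}). The only difference is that you spell out the bookkeeping (nonnegativity of the right-hand side of (\ref{2.9}), and the vanishing of the $G^{KL}G_{KL,\alpha}$ term via $\partial_\alpha \ln\det G = 0$) that the paper leaves implicit.
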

\begin{proof}
  From (\ref{2.9}), $\sqrt{\det G}$ is subharmonic. Since $B$ is
  parabolic and $\sqrt{\det G}$ is bounded above, it must be constant.
  Then the right-hand side of (\ref{2.9}) vanishes, which implies that
  $F = 0$ and $\lambda = 0$. Substituting this into the first equation of
  (\ref{2.3}), whose left-hand side vanishes, proves the proposition.
  \end{proof}

Normalizing $\det G$ to be one, the equation
\begin{equation} \label{2.11}
g^{\alpha \beta} \: 
  G_{IJ; \alpha \beta} \: - \:
   g^{\alpha \beta} \: G^{KL} \: G_{IK, \alpha} \:
   G_{LJ, \beta} = 0
\end{equation}
is the local expression for a harmonic map from $B$ to the symmetric
space $\det^{-1}(\pm 1)/\OO(N) \cong \SL(N,\R)/\SO(N)$
\cite[Proposition 4.17]{Lott (2007)}. More globally, fixing a basepoint
$b_0 \in B$, let
$\rho : \pi_1(B, b_0) \rightarrow \det^{-1}(\pm 1)$ be the monodromy of the
flat vector bundle $e$.
Then (\ref{2.11}) is the equation for a
$\rho$-equivariant harmonic map 
$\widetilde{G} : \widetilde{B} \rightarrow
\det^{-1}(\pm 1)/\OO(N) \cong \SL(N,\R)/\SO(N)$
on the universal cover $\widetilde{B}$.

\subsection{Codimension-one base} \label{subsect2.3}
 
Returning to the equations (\ref{2.3}), suppose that $N=1$.
The matrix $(G_{IJ})$ just becomes a function $G$.

\begin{proposition} \label{prop2.12}
  If
\begin{itemize}
\item  $B$ is parabolic,
\item  $\lambda \le 0$ and
\item  $G$ is uniformly bounded above
\end{itemize}
then $G$ is a constant function, $F = 0$, $\lambda = 0$ and
$B$ is Ricci-flat.
\end{proposition}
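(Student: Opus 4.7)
The plan is to obtain the first three conclusions by specializing Proposition~\ref{prop2.10} to $N=1$, and then to read off Ricci-flatness of $B$ from the base equation in~(\ref{2.3}). When $N=1$ the matrix $(G_{IJ})$ reduces to the scalar $G$, so $\det G = G$, and the hypothesis that $G$ is uniformly bounded above is literally the hypothesis that $\det G$ is bounded above in Proposition~\ref{prop2.10}. The other two hypotheses ($B$ parabolic, $\lambda \le 0$) match verbatim, so that proposition immediately yields $G$ constant, $F = 0$ and $\lambda = 0$.

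For the remaining assertion that $B$ is Ricci-flat, I would substitute these three conclusions into the $\overline{R}_{\alpha\beta}^{\overline{g}}$ equation of~(\ref{2.3}). The two middle terms on the right-hand side involve only first and second derivatives of $G_{IJ}$, so they vanish by constancy of $G$; the curvature-squared term vanishes because $F=0$; and the left-hand side vanishes because $\lambda = 0$. What survives is $0 = R_{\alpha\beta}^g$, which is precisely Ricci-flatness of $B$.

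There is no genuine obstacle: given Proposition~\ref{prop2.10}, both halves of the statement are immediate. The only point worth noting is that the standing holonomy assumption at the end of Subsection~\ref{subsect2.1}, namely that $e$ has holonomy in $\det^{-1}(\pm 1) \subset \GL(N,\R)$, becomes the mild condition that $e$ is a real line bundle when $N=1$; this is exactly what makes $G$ a globally defined positive function on $B$, so that the subharmonicity and parabolicity arguments behind Proposition~\ref{prop2.10} can be applied without modification.
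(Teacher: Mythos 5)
Your proposal is correct and follows exactly the paper's argument: apply Proposition~\ref{prop2.10} (using $\det G = G$ when $N=1$) to get $G$ constant, $F=0$ and $\lambda=0$, then read off $R^g_{\alpha\beta}=0$ from the third equation of~(\ref{2.3}). The extra remarks about the holonomy assumption are accurate but not needed beyond what the paper already assumes.
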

\begin{proof}
From Proposition \ref{prop2.10}, $G$ is constant, $F = 0$ and $\lambda = 0$. 
The third equation in (\ref{2.3}) implies that $B$ is Ricci-flat.
\end{proof}

\subsection{Two dimensional base} \label{subsect2.4}

Returning to the equations (\ref{2.3}), suppose that $n=2$ and
$(M, \overline{g})$ is Einstein.

\begin{proposition} \label{prop2.13}
  The second equation of (\ref{2.3}) is equivalent to the statement that
  $\sqrt{\det G} \: G_{IJ} \frac{F^J}{\dvol_B}$ defines a flat section of
  $e^*$.
\end{proposition}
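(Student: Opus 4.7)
The plan is to show that, under the Einstein hypothesis, the second equation of (\ref{2.3}) is the componentwise statement that the codifferential of a certain $e^*$-valued $2$-form on $B$ vanishes, and then to translate this via Hodge duality in the two dimensional base into a parallelism statement.

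Since $\overline{g}_{I\alpha} = G_{IJ} A^J_\alpha$ vanishes at the chosen point where $A^J = 0$, the Einstein equation forces the right-hand side of the second equation of (\ref{2.3}) to be zero. First I would multiply this equation by $2\sqrt{\det G}$ and use (\ref{2.7}), which rewrites $\frac{1}{2}\sqrt{\det G}\: G^{KL} G_{KL;\gamma}$ as $\partial_\gamma \sqrt{\det G}$. After using the symmetry of $g^{\gamma\delta}$ to rename dummy indices, the three terms consolidate as the product-rule expansion of
\begin{equation*}
g^{\gamma\delta} \bigl[ \sqrt{\det G} \: G_{IK} F^K_{\alpha\gamma} \bigr]_{;\delta} = 0,
\end{equation*}
where the semicolon denotes covariant differentiation on $B$ paired with the flat connection on $e^*$ (the latter being trivial in a parallel frame for $e$). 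The combinatorics works out cleanly because $\sqrt{\det G}$ and $G_{IK}$ carry no $B$-indices, so their covariant derivatives in this combined connection reduce to partial derivatives, while the Christoffel corrections for the indices $\alpha,\gamma$ are already absorbed into $F^K_{\alpha\gamma;\delta}$.

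Now set $\eta_I := \sqrt{\det G} \: G_{IK} F^K$, regarded as an $e^*$-valued $2$-form on $B$. The displayed equation is exactly $(d^* \eta_I)_\alpha = 0$. In a two dimensional base one has $d^* = -*d*$ on $2$-forms, so $d^* \eta = 0$ is equivalent to $d(*\eta) = 0$. The Hodge star sends the top-degree form $\eta_I$ to the $e^*$-valued function $\sqrt{\det G} \: G_{IJ} F^J/\dvol_B$, and the condition $d(*\eta) = 0$ with respect to the flat connection on $e^*$ is exactly the statement that $*\eta$ is a flat section of $e^*$. The one piece of bookkeeping that requires care is the consolidation step: verifying that the three terms of (\ref{2.3}), after multiplication by $2\sqrt{\det G}$, really do assemble into the single covariant divergence above. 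Once that identity is in hand, the Hodge duality step in two dimensions is routine.
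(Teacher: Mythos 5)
Your proof is correct, and the consolidation identity you flag as the one step needing care does hold: multiplying the (vanishing, by the Einstein hypothesis) right-hand side of the second equation of (\ref{2.3}) by $2\sqrt{\det G}$, using (\ref{2.7}) to convert the trace term $\frac12 \sqrt{\det G}\, G^{KL}G_{KL,\gamma}$ into $\partial_\gamma \sqrt{\det G}$, and renaming dummy indices via the symmetry of $g^{\gamma \delta}$ does reassemble the three terms into $g^{\gamma\delta}\bigl(\sqrt{\det G}\:G_{IK}F^K_{\alpha\gamma}\bigr)_{;\delta}=0$. Your route differs from the paper's in organization rather than in substance. The paper chooses isothermal coordinates $g = e^{2\phi}\left( (dx^1)^2 + (dx^2)^2 \right)$, writes out the Christoffel symbols (\ref{2.14}), and checks directly that the equation reduces to $\partial_\alpha \left( \sqrt{\det G}\: G_{IJ} e^{-2\phi} F^J_{12}\right) = 0$, which it then reads invariantly as local constancy of $\sqrt{\det G}\: G_{IJ} \frac{F^J}{\dvol_B}$. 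You instead first identify the equation, in arbitrary base dimension, as the vanishing of the codifferential of the $e^*$-valued $2$-form $\eta_I = \sqrt{\det G}\: G_{IK} F^K$ coupled to the flat connection, and only then invoke $\dim(B)=2$ through $d^*\eta = 0 \Leftrightarrow d(*\eta)=0$ with $*\eta_I = \eta_I / \dvol_B$. This buys a coordinate-free derivation that makes transparent exactly where two-dimensionality enters (a co-closed top-degree form is Hodge-dual to a closed $0$-form only when the form degree equals the dimension), at the cost of slightly more machinery than the paper's elementary isothermal-coordinate check. One point you should make explicit, as the paper's proof does: the standing assumption that $e$ has holonomy in $\det^{-1}(\pm 1)$ is what makes $\sqrt{\det G}$ globally defined, hence $\eta_I$ a genuine $e^*$-valued form and its Hodge dual a genuine section of $e^*$ rather than merely a local object.
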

\begin{proof}
  Choose local isothermal coordinates on $B$ so that
  $g = e^{2\phi} \left( (dx^1)^2 + (dx^2)^2 \right)$.
  The nonzero Christoffel symbols, up to symmetries, are
  \begin{align} \label{2.14}
    & \Gamma^1_{\: \: 11} = \Gamma^2_{\: \: 21} = - \Gamma^1_{\: \: 22} =
    \partial_{x^1} \phi, \\
    & \Gamma^2_{\: \: 22} = \Gamma^1_{\: \: 12} = - \Gamma^2_{\: \: 11} =
    \partial_{x^2} \phi. \notag
    \end{align}
  The second equation of (\ref{2.3}) becomes equivalent to
  \begin{equation} \label{2.15}
    \partial_\alpha
    \left( \sqrt{\det G} \: G_{IJ} e^{-2 \phi} F^J_{12} \right) = 0.
    \end{equation}
  As $\dvol_B = e^{2\phi} dx^1 \wedge dx^2$, we can express (\ref{2.15}) in more
  invariant terms as saying that
  $\sqrt{\det G} \: G_{IJ} \frac{F^J}{\dvol_B}$ is locally constant.
  By assumption, $\det e$ is a trivial bundle, so 
  $\sqrt{\det G} \: G_{IJ} \frac{F^J}{\dvol_B}$ is naturally a section
  of $e^*$. Equation (\ref{2.15}) says that it is a locally constant section.
\end{proof}

\begin{corollary} \label{cor2.16}
  If the monodromy representation $\rho : \pi_1(B, b_0) \rightarrow
  \det^{-1}(\pm 1) \subset \GL(N, \R)$ of $e^*$ does not have a
  trivial one-dimensional subrepresentation
  then $F = 0$.
  \end{corollary}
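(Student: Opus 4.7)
The plan is to read off the corollary as an essentially immediate consequence of Proposition \ref{prop2.13}. That proposition already identifies the $e^*$-valued quantity $\sqrt{\det G} \: G_{IJ} \frac{F^J}{\dvol_B}$ as a flat (i.e.\ parallel) section of the flat vector bundle $e^*$ over $B$. The remaining work is just to translate between flat sections and invariants of the monodromy representation, and then to conclude that $F$ itself must vanish.

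First I would recall the standard correspondence for a flat bundle over a connected base: evaluation at the basepoint $b_0$ gives a bijection between global flat sections of $e^*$ and vectors in the fiber $e^*_{b_0}$ that are fixed by the monodromy representation $\rho : \pi_1(B, b_0) \rightarrow \det^{-1}(\pm 1) \subset \GL(N, \R)$. A nonzero $\rho$-fixed vector is, by definition, a nonzero vector spanning a trivial one-dimensional subrepresentation of $\rho$. Under the hypothesis that $\rho$ admits no such subrepresentation, the space of $\rho$-fixed vectors in $e^*_{b_0}$ is zero, and hence the space of flat sections of $e^*$ is zero. Therefore the particular flat section produced by Proposition \ref{prop2.13} must vanish identically on $B$.

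Finally I would unpack this vanishing in local coordinates. Pointwise on $B$, we have $\sqrt{\det G} > 0$ and $(G_{IJ})$ is an invertible symmetric matrix (it comes from a Euclidean inner product on the fibers of $e$), while $\dvol_B$ is a nowhere-vanishing $2$-form. Thus the vanishing of $\sqrt{\det G} \: G_{IJ} \frac{F^J}{\dvol_B}$ for each $I$ forces $F^J = 0$ for every $J$, i.e.\ $F = 0$, as claimed. There is no real obstacle here beyond keeping track of the identification of flat sections with monodromy invariants; the substance of the corollary is already carried by Proposition \ref{prop2.13}.
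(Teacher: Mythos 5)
Your proposal is correct and matches the paper's intended argument: the paper states the corollary without proof precisely because it follows immediately from Proposition \ref{prop2.13} via the standard identification of flat sections of $e^*$ with $\rho$-fixed vectors in the fiber, together with the pointwise invertibility of $\sqrt{\det G}\, G_{IJ}$ and the nonvanishing of $\dvol_B$. Nothing is missing.
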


\begin{example}
  In the collapsing of a $K3$ manifold considered in
  \cite{Gross-Wilson (2000)}, the base $B$ is $S^2$ minus
  24 points; see Example \ref{ex3.8} below. The vector
  bundle $e^*$ has unipotent holonomy when going around
  a small loop around any of the 24 punctures.  However,
  the invariant subspaces do not line up globally and
  there is no nonzero flat section of $e^*$. Hence $F=0$,
  as seen directly in \cite{Gross-Wilson (2000)}.
  \end{example}

\begin{remark}
  In the Lorentzian setting, the expression
  $\sqrt{\det G} \: G_{IJ} \frac{F^J}{\dvol_B}$ coincides with the
  ``twist constants'' of general relativity, although in the
  relativity literature the relation to curvature seems to be
  missing, along with the topological meaning.
    \end{remark}

We note that parabolicity of a two dimensional 
Riemannian manifold just
depends on the underlying conformal structure.

\begin{proposition} \label{prop2.17}
  Suppose that $\dim(B) = 2$ and that the hypotheses of Proposition \ref{prop2.10}
  are satisfied.  Then $\widetilde{G}^* g_{\SL(N, \R)/\SO(N)}$ is the
  pullback (from $B$ to $\widetilde{B}$) of a function times $g_B$.
\end{proposition}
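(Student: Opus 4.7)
The plan is to invoke Proposition~\ref{prop2.10} to make $F=0$, $\lambda=0$, and $\det G=1$, and then extract from the third (base-base) equation of (\ref{2.3}) an algebraic identity that forces $R^g_{\alpha\beta}$ to be a constant multiple of $(\widetilde{G}^*g_{\SL(N,\R)/\SO(N)})_{\alpha\beta}$. The assertion of the proposition will then follow from the two-dimensional identity $R^g_{\alpha\beta} = \frac12 R^g g_{\alpha\beta}$.

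First, I would apply Proposition~\ref{prop2.10}: its conclusions let us normalize $\det G = 1$ and give $F=0$, $\lambda=0$. Since $(M,\overline{g})$ is then Ricci-flat, the third equation of (\ref{2.3}) reduces to
\begin{equation*}
R_{\alpha\beta}^g \: = \: \frac12\,G^{IJ}G_{IJ;\alpha\beta} \: - \: \frac14\,G^{IJ}G_{JK,\alpha}G^{KL}G_{LI,\beta}.
\end{equation*}
The next step is to collapse both right-hand-side terms into the single trace $\tr\bigl(G^{-1}\partial_\alpha G\cdot G^{-1}\partial_\beta G\bigr)$. The normalization $\det G = 1$ gives $G^{IJ}G_{IJ,\alpha} = \partial_\alpha\log\det G = 0$; this both kills the Christoffel correction in $G_{IJ;\alpha\beta}$ and, upon differentiation in $\partial_\beta$, rewrites $G^{IJ}G_{IJ;\alpha\beta}$ as the aforementioned trace. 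The remaining term equals the same trace by a direct matrix calculation.

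Putting this together yields $R_{\alpha\beta}^g = \frac14\,\tr\bigl(G^{-1}\partial_\alpha G\cdot G^{-1}\partial_\beta G\bigr)$. The right-hand side is exactly $\frac14(\widetilde{G}^*g_{\SL(N,\R)/\SO(N)})_{\alpha\beta}$: the standard $\SL(N,\R)$-invariant metric on $\SL(N,\R)/\SO(N)$, identified with positive symmetric determinant-one matrices, pairs tangent vectors $X,Y$ at $G$ by $\langle X,Y\rangle_G = \tr(G^{-1}XG^{-1}Y)$, and the pullback tensor $\widetilde{G}^*g_{\SL(N,\R)/\SO(N)}$ descends to a tensor on $B$ because $\rho$ takes values in the isometry group of the symmetric space. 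The two-dimensional identity $R_{\alpha\beta}^g = \frac12 R^g g_{\alpha\beta}$ then gives
\begin{equation*}
\widetilde{G}^*g_{\SL(N,\R)/\SO(N)} \: = \: 2R^g\,g_B,
\end{equation*}
which is the desired conclusion with explicit conformal factor $\psi = 2R^g$.

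The main (modest) obstacle will be the algebraic step just described: the two $G$-derivative terms in the base-base Ricci equation, after invoking $\det G = 1$, produce two equal copies of $\tr(G^{-1}\partial_\alpha G\cdot G^{-1}\partial_\beta G)$, and this coincidence is exactly what makes the proposition true. Conceptually, the statement says that the Hopf differential of the $\rho$-equivariant harmonic map $\widetilde{G}$---an automatically holomorphic quadratic differential on $\widetilde{B}$---is forced to vanish identically by the full Ricci-flat equation on $M$, rather than merely by the harmonic map equation (\ref{2.11}).
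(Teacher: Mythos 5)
Your proposal is correct and follows essentially the same route as the paper: apply Proposition \ref{prop2.10}, use constancy of $\det G$ to convert $G^{IJ}G_{IJ;\alpha\beta}$ into $\tr(G^{-1}\partial_\alpha G\,G^{-1}\partial_\beta G)$, so that the third equation of (\ref{2.3}) yields $\widetilde{G}^* g_{\SL(N,\R)/\SO(N)} = 4\Ric_B = 2R^g g_B$, which is exactly the paper's (\ref{2.18}). You have merely filled in the algebra that the paper leaves implicit, and your closing remark about the vanishing Hopf differential is an accurate gloss on why the conclusion holds.
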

\begin{proof}
  Using the conclusion of Proposition \ref{prop2.10}, the third equation of (\ref{2.3})
  becomes
  \begin{equation} \label{2.18}
    G^{IJ} G_{JK,\alpha} G^{KL} G_{LI,\beta} = 4 R_{\alpha \beta} =
    2Rg_{\alpha \beta}.
  \end{equation}
  The left-hand side of (\ref{2.18}) is the local expression for
  $G^* g_{\SL(N, \R)/\SO(N)}$ \cite[(4.16)]{Lott (2007)}. The
  proposition follows.
\end{proof}

\subsection{One dimensional base} \label{subsect2.5}

Returning to the equations (\ref{2.3}), suppose that $n=1$. We only
consider the case $\lambda = 0$. Automatically, $F=0$. We give
$B$ a unit speed parametrization $s$.

\begin{proposition} \label{prop2.19}
  Either $G$ is constant or, up to a change of basis of $e$ and an
  isometric reparametrization of $s$, we have $G(s) = s^A$ where
  $A$ is a symmetric $N \times $N matrix satisfying $\Tr(A) = 2$ and
  $\Tr(A^2) = 4$.
\end{proposition}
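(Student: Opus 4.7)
The plan is to reduce the Einstein equations to matrix ODEs in $s$, extract a conservation law, and solve. With $n = 1$ and unit-speed $s$, all Christoffel symbols of $B$ vanish, $R^g = 0$, and $F = 0$ is automatic for dimensional reasons. I work in terms of $M := G^{-1} G'$ (with $' = d/ds$) and $u := \sqrt{\det G}$. Equation (2.9), with $\lambda = 0$ and $F = 0$, becomes simply $u'' = 0$, so $u(s) = as + b$ is affine.

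Substituting $G' = GM$ and $G'' = GM^2 + GM'$ into the first line of (2.3), specialized to $\lambda = 0$ and $F = 0$, I expect it to reduce to $M' = -\tfrac12 \Tr(M)\, M$. Since $\Tr(M) = (\ln \det G)' = 2u'/u$, this rearranges to $(uM)' = 0$, so $C := uM$ is a constant matrix with $\Tr C = 2u' = 2a$. Plugging $M = C/u$ into the third line of (2.3), which reduces to $\Tr(G^{-1}G'') = \tfrac12 \Tr(M^2)$, and using $\Tr(G^{-1}G'') = \Tr(M^2) + \Tr(M') = \Tr(M^2) - \tfrac12 (\Tr M)^2$, yields the algebraic constraint
\begin{equation*}
\Tr(C^2) = (\Tr C)^2.
\end{equation*}
Moreover, since $G$ is symmetric, $GM = G' = (G')^T = M^T G$, so $M$ is $G$-self-adjoint and $C$ is similar via $G^{1/2}$ to a symmetric matrix; in particular $C$ is diagonalizable with real eigenvalues.

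If $a = 0$, then $\Tr C = 0$ and $\Tr(C^2) = 0$ force $C = 0$, so $M \equiv 0$ and $G$ is constant. If $a \neq 0$, I first use an isometric reparametrization $s \mapsto s + b/a$ (with orientation reversal if $a < 0$) to arrange $u(s) = as$ with $a > 0$, and then a constant basis change of $e$ with $|\det S| = 1/a$ to normalize $u(s) = s$; the interval for $s$ then lies in $(0, \infty)$, $\Tr C = 2$, and $\Tr(C^2) = 4$. The ODE $G' = GC/s$ integrates to $G(s) = G(s_0)(s/s_0)^C$, and a further basis change of unit determinant diagonalizes $C$ to $A = \mathrm{diag}(\lambda_1, \ldots, \lambda_N)$. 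The symmetry $G^T = G$ forces $G$ to decompose along the eigenspaces of $A$, and since $\det G = s^2$ pins down the product of the resulting diagonal constants at $s_0$, a final diagonal rescaling of determinant $\pm 1$ makes $G(s) = s^A$ literally.

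The main obstacle I anticipate is the bookkeeping in the $a \neq 0$ case: I must package together the isometric reparametrization, the scaling of $e$ setting $u = s$, and the remaining $\SL^{\pm}(N, \R)$-basis changes (which diagonalize $C$, split $G$ according to eigenspaces, and rescale blockwise) so that the final expression is the literal $s^A$ rather than just a conjugate of it. The algebraic identity $\Tr(C^2) = (\Tr C)^2$ is exactly what converts $\Tr A = 2$ (from $u$ being linear with slope $1$) into the claimed $\Tr(A^2) = 4$.
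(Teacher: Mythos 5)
Your proof is correct and follows essentially the same route as the paper's: both integrate the fiber component of the Ricci-flat equation to the conservation law $\partial_s\left(s\, G^{-1}G_s\right)=0$ (your constant matrix $C=\sqrt{\det G}\,G^{-1}G'$) and use the base component to extract the trace constraint forcing $\Tr(A)=2$, $\Tr(A^2)=4$. The only difference is packaging: you unify the two cases through the single identity $\Tr(C^2)=(\Tr C)^2$ and go on to diagonalize $A$, whereas the paper splits into $\det G$ constant versus $\det G = as^2$ and simply normalizes $G(1)=\Id$.
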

\begin{proof}
  From (\ref{2.9}), we have $\frac{d^2}{ds^2} \sqrt{\det G} = 0$. Then
  either $\det G$ is constant or, after an
  isometric reparametrization of $s$, we can write
  $\det G = as^2$ for some $a > 0$, with $s \in (c_1,c_2) \subset (0, \infty)$.

  If $\det G$ is constant then the first equation of (\ref{2.3}) gives
  the matrix equation
  \begin{equation} \label{2.20}
G_{ss} - G_s G^{-1} G_s = 0
\end{equation}
and hence
\begin{equation} \label{2.21}
\Tr(G^{-1} G_{ss}) - \Tr (G^{-1} G_s G^{-1} G_s) = 0,
\end{equation}
while the third equation of (\ref{2.3}) gives
\begin{equation} \label{2.22}
\Tr(G^{-1} G_{ss}) - \frac12 \Tr (G^{-1} G_s G^{-1} G_s) = 0.
\end{equation}
Hence $\Tr (G^{-1} G_s G^{-1} G_s) = 0$, or
$\Tr \left(
\left( G^{- \: \frac12} G_s  G^{- \: \frac12} \right)^2
\right) = 0$.
This implies that $G_s = 0$, so $G$ is constant.

If $\det G = as^2$ then the first equation of (\ref{2.3}) becomes
\begin{equation} \label{2.23}
  G_{ss} + \frac{1}{s} G_s - G_s G^{-1} G_s = 0,
\end{equation}
or $\partial_s (s G^{-1} G_s) = 0$. Thus $G^{-1} G_s = \frac{A}{s}$ for
some matrix $A$. By a linear change of basis of $\{x^I\}_{I=1}^N$,
we can assume that $G(1) = \Id$. Then $G(s) = s^A$. As $G(s)$ is symmetric,
the matrix
$A$ must also be symmetric. As $\det G = a s^2$, we must have $a=1$ and
$\Tr(A) = 2$. 
The third equation in (\ref{2.3}) again becomes (\ref{2.22}), which now implies that
$\Tr(A^2-A) - \frac12 \Tr(A^2) = 0$. Hence
$\Tr(A^2) = 2 \Tr(A) = 4$.
\end{proof}

\section{Collapsing of almost Ricci-flat $4$-manifolds} \label{sect3}

In this section we prove Theorem \ref{thm1.1} in the setting of
$4$-manifolds whose Ricci curvature goes to zero relative to the
volume, and that are locally volume collapsed. Subsection \ref{subsect3.1}
has a review of some of the results of \cite{Cheeger-Tian (2006)} and
their consequences.  Subsections \ref{subsect3.2}, \ref{subsect3.3},
\ref{subsect3.4} and \ref{subsect3.5} give the proof of Theorem
\ref{thm1.1} when the limit space has dimension
three, two, one and zero, respectively.

\subsection{General convergence arguments} \label{subsect3.1}

We 
consider four dimensional compact connected orientable
Riemannian manifolds that have Ricci curvature going to zero,
relative to the volume. This is more general than the
setup of the introduction.
In order to prove properties of such manifolds by
contradiction, one considers sequences
$\{ (M_i, \overline{g}_i )\}_{i=1}^\infty$ where each $M_i$ is a compact
connected orientable
four dimensional manifold and 
$\lim_{i \rightarrow \infty} \| \Ric(M_i, \overline{g}_i) \|_\infty \cdot
\vol(M_i, \overline{g}_i)^{\frac12}
= 0$.

\begin{example}
  If $M$ is the underlying $4$-manifold of a complex
  elliptic surface then LeBrun showed that $M$ admits a sequence of
  Riemannian metrics $\{\overline{g}_i\}_{i=1}^\infty$ with
  $\lim_{i \rightarrow \infty} \| \Ric(M, \overline{g}_i) \|_\infty \cdot
  \vol(M, \overline{g}_i)^{\frac12} = 0$ if and only if $M$ is relatively
  minimal, i.e. has no smooth rational $(-1)$-curves in the fibers
  \cite[Theorem 4]{LeBrun (1999)}.
\end{example}

After rescaling, we can assume that
$\lim_{i \rightarrow \infty} \| \Ric(M_i, \overline{g}_i) \|_\infty = 0$ and
$\vol(M_i, \overline{g}_i) \le V$ for all $i$, for some $V < \infty$.
We first address the noncollapsing case.
Suppose that for some $s,v > 0$, after passing to a subsequence
there are points $m_i \in M_i$ so that 
$\vol(B_s(m_i)) \ge v s^4$ for all $i$.
After passing to a further subsequence,
there is a pointed Gromov-Hausdorff limit
$\lim_{i \rightarrow \infty} (M_i, \overline{g}_i, m_i) = (X, d_X, x_\infty)$,
where $X$ is a complete locally compact metric space whose Hausdorff dimension
is four.

If there is a uniform
upper bound on the $L^2$-norms of the curvatures of
$\{ (M_i, \overline{g}_i )\}_{i=1}^\infty$
then
$X$ is a four dimensional Ricci-flat orbifold
\cite{Anderson (1990),Bando-Kasue-Nakajima (1989),Tian (1990)}.
Such an $L^2$-curvature bound is guaranteed 
if there is a uniform upper bound on the
Euler characteristics of the $M_i$'s
\cite[Remark 1.4]{Cheeger-Tian (2006)}; this in turn is guaranteed if
there is a uniform upper
diameter bound
\cite{Cheeger-Naber (2015)}.

The subject of this paper is rather the collapsing case when
\begin{equation} \label{neweqn}
  \lim_{i \rightarrow \infty} s^{-4} \sup_{m \in M_i} \vol(B_s(m)) = 0
\end{equation}
for each $s > 0$.
To apply the results of \cite{Cheeger-Tian (2006)}, we need a uniform
  upper bound on the $L^2$-norms of the curvatures
  of the $\{ (M_i, \overline{g}_i )\}_{i=1}^\infty$. Again,
 it  suffices to have a uniform upper bound on the Euler characteristics
  of the $M_i$'s.

From (\ref{neweqn}),
for any sequence $\{s_i\}_{i=1}^\infty$ of positive numbers converging to
zero, after passing to a further subsequence of
$\{ (M_i, \overline{g}_i )\}_{i=1}^\infty$ we can assume that
$s_i^{-4} \vol(B_{s_i}(m)) \le \frac{1}{i}$ for all
$m \in M_i$. From 
\cite[Theorem 0.1 and Remark 5.11]{Cheeger-Tian (2006)},
there is some positive integer
${\mathcal N}$
so that for all large $i$, there are
points $\{p_{i,j}\}_{j=1}^{{\mathcal B}_i}$ in $M_i$, with
${\mathcal B}_i \le {\mathcal N}$,
such that
\begin{equation}
  \int_{M_i - \bigcup_{j=1}^{{\mathcal B}_i} B_{p_{i,j}}(s_i)} |\Rm|^2 \: \dvol_{M_i}
  \le \const i^{-1}.
  \end{equation}

Choose basepoints $m_i \in M_i$. 
After passing to a further subsequence, we can assume
that $\lim_{i \rightarrow \infty} (M_i, \overline{g}_i, m_i) =
(X, d_X, x_\infty)$ in the pointed Gromov-Hausdorff topology,
where $X$ is a complete locally compact metric space whose Hausdorff
dimension is less than four. We can also assume that
the ${\mathcal B}_i$'s are all the same number, say ${\mathcal B}$.
Then we can assume that for each $j \in \{1, \ldots, {\mathcal B} \}$,
either $\lim_{i \rightarrow \infty} p_{i,j} = x_j$ for some
$x_j \in X$ or $\lim_{i \rightarrow \infty} d_{M_i}(m_i, p_{i,j}) = \infty$.
After removing repetitions, let $\{x_j\}_{j=1}^C \subset X$ be
the limits.

From \cite[Theorem 0.8 and Remark 8.22]{Cheeger-Tian (2006)},
for any compact subset
$K$ of $X - \{x_1, \ldots, x_C\}$, there  is some
$\epsilon_{K} > 0$ so that for any $q \in [1, \infty)$ and for large $i$,
on the subset of $(M_i, \overline{g}_i )$ that is $\epsilon_{K}$-close
to $K$ we have uniformly bounded $W^{2,q}$-covering geometry;
see also
\cite[Theorem 1.1]{Li (2010)}.
Hence we can apply techniques from bounded curvature collapse
\cite[Remark 2.7]{Cheeger-Tian (2006)}.
We will use convergence of Riemannian groupoids, as in
\cite[Sections 5.1-5.4]{Lott (2007)}.
Choose $x^\prime \in X - \{x_1, \ldots, x_C\}$.  (The choice of
$x^\prime$ will be irrelevant.)  Choose a sequence
$m_i^\prime \in M_i$ with $\lim_{i \rightarrow \infty} m_i^\prime =
x^\prime$.
Using an exhaustion of $X - \{x_1, \ldots, x_C\}$ by precompact open
sets containing $x^\prime$,
after passing to a subsequence,
we can assume that
$\lim_{i \rightarrow \infty}
\left( M_i - \bigcup_{j=1}^{{\mathcal B}_i}
B_{p_{i,j}}(s_i),\overline{g}_i, m_i^\prime \right) =
({\mathcal X}, g_{{\mathcal X}}, x^\prime)$, where
$({\mathcal X}, g_{\mathcal X}, x^\prime)$ is a
four dimensional closed
Hausdorff pointed Riemannian groupoid whose orbit space ${\mathcal O}$ is
$X - \{x_1, \ldots, x_C\}$, and we can think of $x^\prime$ as an orbit.
Taking the metric completion of ${\mathcal O}$ amounts to
adding a finite number of points.
The unit space of ${\mathcal X}$ carries a structure sheaf
$\underline{\mathfrak g}$ of
nilpotent Lie algebras, which acts on the unit space
by local Killing vector fields.
The local Killing vector fields do not simultaneously vanish at any point in
the unit space.

The metric convergence to $({\mathcal X}, g_{{\mathcal X}})$
is in the weak $W^{2,q}$-topology,
for any $1 \le q < \infty$. Hence the metric
$g_{{\mathcal X}}$ on the unit space of
${\mathcal X}$ is Ricci-flat. As the limit is constructed using harmonic
coordinates, it follows that the metric on the unit space is smooth.
(If each $M_i$ is Ricci-flat then the convergence is $C^\infty_{loc}$.)

If ${\mathcal X}$ has trivial isotropy
groups then the orbit space ${\mathcal O}$ is smooth.
Then for any precompact open subset $U \subset {\mathcal O}$ and for large $i$,
there is a subset of the ``regular'' region
$M_i - \bigcup_{j=1}^{{\mathcal B}_i} B_{p_{i,j}}(s_i)$
that is the total space
of a fiber bundle over $U$, with infranil fibers.
We are interested in the abelian case with $N$-torus fibers.
In this case, the underlying groupoid of the limit 
${\mathcal X}$ can be described in terms of the following transition
maps. Let $\{U_a\}_{a \in {\mathcal A}}$ be a
covering of ${\mathcal O}$ by
contractible open sets. For $a, b \in {\mathcal A}$, the transition
map $\phi_{ab}$ is a smooth map from $U_a \cap U_b$ to
$\frac{\R^N}{\R^N_\delta} \rtimes \GL(N, \R)_\delta$, where the
$\delta$-subscript denotes the discrete topology
\cite[(5.3)]{Lott (2010)}.
(Compare with the transition maps for a principal $T^N$-bundle,
which take value in $\frac{\R^N}{\Z^N}$, and the transition maps for
a twisted principal $T^N$-bundle, which take value in
$\frac{\R^N}{\Z^N} \rtimes \GL(N, \Z)$.)
That is,
$\phi_{ab}$ is represented by a pair
$(f_{ab}, \gamma_{ab})$ where $f_{ab} : U_a \cap U_b \rightarrow \R^N$
is a smooth map
and $\gamma_{ab} \in \GL(N, \R)$.
Two pairs
$(f_{ab}, \gamma_{ab})$ and $(f^\prime_{ab}, \gamma^\prime_{ab})$ are
equivalent if $\gamma_{ab} = \gamma^\prime_{ab}$ and
$f_{ab} = f^\prime_{ab} + v_{ab}$ for some constant vector
$v_{ab} \in \R^N$. The $\{\phi_{ab}\}_{a,b \in {\mathcal A}}$
have to satisfy
the cocycle condition.
Up to equivalence, such structures on ${\mathcal O}$ are classified by
a set $\HH^1({\mathcal O}, \underline{\mathcal E})$ that fits into
an exact sequence of pointed sets \cite[(5.7)]{Lott (2010)}
\begin{equation}
  \GL(N, \R) \longrightarrow
  \HH^2({\mathcal O}; \R^N) \longrightarrow
  \HH^1({\mathcal O}, \underline{\mathcal E})
  \longrightarrow
  \HH^1({\mathcal O}, \GL(N, \R)_{\delta}).
\end{equation}
Here $\GL(N, \R)$ acts on $\HH^2({\mathcal O}; \R^N)$.
The set $\HH^1({\mathcal O}, \GL(N, \R)_{\delta})$ classifies the
flat $\R^N$-vector bundles on ${\mathcal O}$.

A Riemannian metric on the groupoid has the following description.
For each $a \in {\mathcal A}$,
let $G_a$ be a smooth map from $U_a$ to the positive-definite symmetric
$(N \times N)$-matrices, let $A_a$ be an $\R^N$-valued $1$-form on $U_a$
and let $g_a$ be a Riemannian metric on $U_a$. Then the
triples $\{(G_a, A_a, g_a)\}_{a \in {\mathcal A}}$ define a
Riemannian metric on the groupoid if for each
$a,b \in {\mathcal A}$, on $U_a \cap U_b$ we have $(G_b, A_b, g_b) =
(\gamma_{ab} G_a \gamma_{ab}^{T}, \gamma_{ab}(A_a + df_{ab}), g_a)$.
(Compare with (\ref{2.1}).)

We claim that in the collapsing case, the flat $\R^N$-vector bundle
on ${\mathcal O}$
corresponding to $\{\gamma_{ab}\}_{a,b \in {\mathcal A}}$
has holonomy in $\det^{-1}(\pm 1) \subset \GL(N, \R)$.
The reason is that we are considering the case when for any
precompact open set $U \subset {\mathcal O}$ and
for large $i$, part of the regular region of $M_i$ is the total space
of a $T^N$-bundle over $U$.
Given such an $i$,
there is a flat $\R^N$-bundle on $U$, whose fiber over
a point in $U$ is the first cohomology of the torus fiber over
the point. As $\HH^N(T^N; \Z) \cong \Z$, the holonomy of this flat
$\R^N$-bundle lies in $\det^{-1}(\pm 1) \subset \GL(N, \R)$. Passing
to the limit as $i \rightarrow \infty$, the holonomy
will still lie in $\det^{-1}(\pm 1) \subset \GL(N, \R)$.
This is true for any such $U$.

Thus we can apply the results of Section \ref{sect2} about
Ricci-flatness to ${\mathcal X}$.
If ${\mathcal X}$ has finite isotropy
groups then there is a similar statement,
with ${\mathcal O}$ becoming an
orbifold.

Let $B$ be a connected component of 
$X - \{x_1, \ldots, x_C\}$. We replace ${\mathcal X}$ by its
subgroupoid consisting of the
orbits corresponding to points in $B$. In the next four subsections
we prove the properties of ${\mathcal X}$ asserted in
Theorem \ref{thm1.1}.

Theorem \ref{thm1.1} is stated in the
introduction for a sequence with bounded diameter,
Ricci curvature going to zero and
volume going to zero.  In this case there is a uniform upper
volume bound and (\ref{neweqn}) holds.
Hence the discussion in this subsection applies.

\subsection{Three dimensional limit space} \label{subsect3.2}

Suppose that $\dim(B) = 3$.
Then it is a three-dimensional Riemannian orbifold, as the
groupoid ${\mathcal X}$ must have finite isotropy groups
in this case.

\begin{remark} \label{rem3.3}
The appearance of possible orbifold points in $B$ has
nothing to do with the points in $X$ that could arise as limits
of the curvature blowup regions in the $(M_i, \overline{g}_i)$'s.
These were already removed in forming $B$.
There could be orbifold points in $B$ even if the manifolds
$(M_i, \overline{g}_i)$ are flat.
\end{remark}

The matrix $(G_{IJ})$ is just a function $G$ on $B$.

\begin{proposition} \label{prop3.4}
  If $B$ is parabolic and $G$ is uniformly bounded above then $G$ is a
  constant function, $F = 0$ and $B$ is flat.
\end{proposition}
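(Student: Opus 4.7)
The approach is to invoke Proposition \ref{prop2.12} and then upgrade its conclusion using a feature special to dimension three. The plan is as follows. First, I would note that in the setting of Subsection \ref{subsect3.1}, the limit Riemannian groupoid $\mathcal{X}$ carries a Ricci-flat metric, so the Einstein constant $\lambda$ appearing in Section \ref{sect2} equals zero. Since $\dim(B) = 3$ and the total dimension is four, the fiber dimension is $N = 1$ and the matrix $(G_{IJ})$ really is the scalar function $G$, as already observed. The three hypotheses of Proposition \ref{prop2.12} — $B$ parabolic, $\lambda \le 0$, and $G$ uniformly bounded above — are then precisely the assumptions at hand. Applying that proposition yields $G$ constant, $F = 0$, and $B$ Ricci-flat.

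The remaining task is to strengthen Ricci-flat to flat, and this is where the dimension-three hypothesis enters. In dimension three the Weyl tensor vanishes identically, so the full Riemann curvature tensor is determined algebraically by the Ricci tensor through the Schouten decomposition; in particular $\Ric = 0$ forces $\Rm = 0$ pointwise. Since $B$ is a three-dimensional Riemannian orbifold, the argument applies on local manifold covers and flatness descends to $B$.

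I do not expect a serious obstacle here: the substantive analytic work has already been absorbed into Proposition \ref{prop2.12} (which rests on applying the maximum principle via parabolicity to the subharmonic function $\sqrt{\det G}$ in equation (\ref{2.9})), and the final upgrade is a standard pointwise identity. The only mildly delicate point is that $B$ is an orbifold rather than a manifold, but as remarked at the end of the introduction to Section \ref{sect2}, the formulas of Subsection \ref{subsect2.1} extend directly to the orbifold setting, and parabolicity and the subharmonicity argument likewise pull back to local uniformizing covers.
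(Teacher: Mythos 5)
Your proposal is correct and follows essentially the same route as the paper: the paper's proof likewise reduces to the general result of Subsection \ref{subsect2.2} (citing Proposition \ref{prop2.10}, whose codimension-one specialization is exactly Proposition \ref{prop2.12}) to get $G$ constant, $F=0$ and $B$ Ricci-flat, and then concludes flatness from the three-dimensionality of $B$. Your added remarks on $\lambda = 0$, the vanishing of the Weyl tensor, and the orbifold extension are consistent with what the paper leaves implicit.
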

\begin{proof}
  The first two statements follow from Proposition \ref{prop2.10}, which also
  says that
  $B$ is Ricci-flat. Since it is three dimensional, it is flat.
\end{proof}

\begin{example} \label{ex3.5}
Examples of the hypotheses of Proposition \ref{prop3.4} come from the construction
of collapsing Ricci-flat metrics on $K3$ in
\cite{Foscolo (2016)}. There is an $\Z_2$-action on $S^1$ by
complex conjugation,
and hence on $T^3$, with eight fixed points.
The paper \cite{Foscolo (2016)} constructs sequences of Ricci-flat
metrics on $K3$ that converge to $X = T^3/\Z_2$ in the Gromov-Hausdorff
topology.  The subset $B$ is $X$ minus the eight fixed
points and a certain number of other points, where the number
can be chosen between $0$ and $16$.
We note that $B$ is parabolic.

During the collapse, ALF gravitational instantons of dihedral type
bubble off from the eight fixed points.
\end{example}

\subsection{Two dimensional limit space} \label{subsect3.3}

Suppose that $\dim(B) = 2$.

\subsubsection{Finite isotropy groups} \label{subsubsect3.3.1}

Consider first the case when the groupoid ${\mathcal X}$ has trivial isotropy
groups.  Then $B$ is
a smooth surface. For large $i$,
the corresponding subset of $M_i$ is the total space of a fiber
bundle over $B$. As $M_i$ is orientable, the fibers must be $2$-tori.
We identify $\SL(2, \R)/\SO(2)$ with the
hyperbolic plane $H^2$, the latter having a fixed orientation.

\begin{proposition} \label{prop3.6}
  If  $\det(G)$ is bounded above and
  $B$ is parabolic then
\begin{itemize}
\item $\det(G)$ is constant,
\item $F=0$ and
  \item With the right choice of orientation on
  $\widetilde{B}$, the $\rho$-equivariant map
  $\widetilde{G} : \widetilde{B} \rightarrow H^2$
    is holomorphic.
    \end{itemize}
  \end{proposition}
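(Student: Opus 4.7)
The first two bullets will follow directly from Proposition~\ref{prop2.10}. The limit metric $\overline{g}$ arising in Subsection~\ref{subsect3.1} is Ricci-flat, so the Einstein constant is $\lambda=0$, and combined with the standing hypotheses (parabolicity of $B$ and $\det G$ bounded above) Proposition~\ref{prop2.10} applies with $N=2$. It produces $\det(G)\equiv$ const, $F=0$, $\lambda=0$, and conclusion (4) of that proposition, which is precisely equation~(\ref{2.11}). As noted after (\ref{2.11}), this is the local equation for a $\rho$-equivariant harmonic map $\widetilde{G}:\widetilde{B}\to \SL(2,\R)/\SO(2)=H^2$.

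The plan for the holomorphicity claim is to combine this harmonicity with the conformal information that the two-dimensional base provides through Proposition~\ref{prop2.17}. Since $\dim(B)=2$ and the hypotheses of Proposition~\ref{prop2.10} are satisfied, Proposition~\ref{prop2.17} gives
\[
\widetilde{G}^*\, g_{\SL(2,\R)/\SO(2)} \: = \: f \cdot \pi^* g_B
\]
for some nonnegative function $f$ on $\widetilde{B}$, where $\pi:\widetilde{B}\to B$ is the covering map. In other words, $\widetilde{G}$ is weakly conformal.

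It remains to invoke the classical dichotomy: a weakly conformal harmonic map from a Riemann surface to a K\"ahler manifold is either holomorphic or anti-holomorphic. (One can see this via the Hopf differential: it is a holomorphic quadratic differential whose vanishing is equivalent to weak conformality, and K\"ahlerness of the target upgrades this vanishing to $\pm$-holomorphicity.) The target $H^2$ is K\"ahler. The simply connected surface $\widetilde{B}$ is orientable, and once an orientation is fixed, the conformal class of $\pi^*g_B$ turns $\widetilde{B}$ into a Riemann surface. Picking the orientation so that the induced complex structure puts $\widetilde{G}$ on the holomorphic side of the dichotomy gives the desired conclusion; the fact that $\widetilde{B}$ is connected ensures the choice is globally consistent.

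The main substantive input is the classical harmonic-plus-conformal $\Rightarrow$ $\pm$-holomorphic fact; everything else is a direct assembly of Propositions~\ref{prop2.10} and \ref{prop2.17}. The only other point that warrants care is compatibility of the chosen orientation with $\rho$-equivariance — when $\rho(g)$ has $\det=-1$ it acts anti-holomorphically on $H^2$, so the corresponding deck transformation of $\widetilde{B}$ must be orientation-reversing. This is automatic since we are only choosing an orientation on the universal cover, not one that descends to $B$, so no genuine obstruction arises.
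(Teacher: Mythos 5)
Your proposal is correct and follows essentially the same route as the paper: the first two bullets from Proposition~\ref{prop2.10} (with $\lambda=0$ since the limit metric is Ricci-flat), weak conformality from Proposition~\ref{prop2.17}, and then the classical fact that a weakly conformal harmonic map between Riemann surfaces is $\pm$-holomorphic. The paper leaves that last step and the orientation/equivariance point implicit, so your elaboration via the Hopf differential is just a more detailed rendering of the same argument.
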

\begin{proof}
  The first two statements follow from Proposition \ref{prop2.10}.
  From Proposition \ref{prop2.17}, the map $\widetilde{G}$ is conformal.
  Then with the right choice of orientation on $\widetilde{B}$, it
  is holomorphic.
\end{proof}

\begin{remark}
  Examples where $\det(G)$ is bounded above come from collapsing
  K\"ahler manifolds $(M_i,\overline{g}_i)$
  that admit holomorphic fiberings over $X$
  with the generic fiber being a torus.
  Then for each $i$, all of the regular fibers have the same volume.  
  \end{remark}

The notion of a semiflat K\"ahler metric is given in
\cite[Section 3]{Gross-Tosatti-Zhang (2013)},
\cite[Section 3.2]{Hein (2012)}, \cite{Hein-Tosatti (2015)}
and
\cite[Section 3.1]{Tosatti-Zhang (2017)}, among other places.
It is usually considered
for torus fibrations, but also makes sense in our context.

\begin{corollary} \label{cor3.7}
  Under the hypotheses of Proposition \ref{prop3.6}, the metric $g_{\mathcal X}$
  on the unit space $M$ of ${\mathcal X}$
  is a semiflat K\"ahler metric.
\end{corollary}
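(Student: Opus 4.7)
By Proposition~\ref{prop3.6} I may normalize $\det G \equiv 1$ and parametrize the unimodular positive-definite matrix $(G_{IJ})$ by a point $\tau=\tau_1+i\tau_2\in H^2$ via the usual identification $\SL(2,\R)/\SO(2)\cong H^2$, so that
\[
G \;=\; \tau_2^{-1}\begin{pmatrix} 1 & \tau_1 \\ \tau_1 & |\tau|^2 \end{pmatrix};
\]
the same proposition gives $F=0$ and the holomorphicity of the associated equivariant period map $\widetilde\tau\colon\widetilde B\to H^2$. The plan is to exploit these three facts to put a compatible K\"ahler structure on the unit space $M$ of $\mathcal X$.

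First I gauge $A$ to zero on a contractible chart (legal since $F=0$), so that $\overline g = G_{IJ}\,dx^I dx^J + g_B$. Picking a holomorphic coordinate $w$ on $B$ with $g_B=e^{2\phi}|dw|^2$, I introduce on the total space the complex-valued function $z = x^1 + \tau(w)x^2$ and propose $\{w,z\}$ as local holomorphic coordinates on $M$. The associated almost complex structure $J$ is real (since $\overline{dw}=d\bar w$ and $\overline{dz}=d\bar z$) and integrable (since $d\bar w,d\bar z$ are exact). Crucially, because $\widetilde\tau$ is holomorphic in $w$, the form
\[
\eta \;:=\; dx^1+\tau\,dx^2 \;=\; dz - x^2\,d\tau
\]
is of type $(1,0)$, hence $G_{IJ}\,dx^I dx^J = \tau_2^{-1}|\eta|^2$ and the fundamental $2$-form
\[
\omega \;=\; \tfrac{i}{2}\,e^{2\phi}\,dw\wedge d\bar w \;+\; \tfrac{i}{2\tau_2}\,\eta\wedge\bar\eta
\]
is of pure type $(1,1)$; equivalently $\overline g$ is Hermitian with respect to $J$.

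To upgrade Hermitian to K\"ahler I would produce a local K\"ahler potential. Using $x^2=(z-\bar z)/(\tau-\bar\tau)$ and letting $\phi_B$ be a local K\"ahler potential for $g_B$ (so $e^{2\phi}=2\,\partial_w\partial_{\bar w}\phi_B$), set
\[
K \;:=\; \phi_B(w,\bar w) \;+\; \frac{(z-\bar z)^2}{2i(\tau-\bar\tau)},
\]
which is manifestly real. A direct calculation, using $\partial_z K = -ix^2$, $\partial_{\bar z}K = ix^2$, the relations $\partial_{\bar w}\tau=0=\partial_w\bar\tau$, and $\bar\partial x^2 = -\bar\eta/(\tau-\bar\tau)$, gives $i\partial\bar\partial K = \omega$. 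This is the main technical step; the reason it works cleanly is precisely the $(1,0)$-type of $\eta$: the cross terms proportional to $x^2 d\tau$ and $x^2 d\bar\tau$ conspire to recombine into the $\eta\wedge\bar\eta$ piece, while the $\phi_B$ term reproduces the base K\"ahler form.

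Finally I check global compatibility and the semiflat property. The transition data of the twisted $T^2$-bundle lie in $(\R^N/\R^N_\delta)\rtimes\GL(N,\R)_\delta$ for $N=2$, acting on a fiber by translations and, on the integral lattice side, by $\GL(2,\Z)$ (the $\pm$-determinant restriction noted in Subsection~\ref{subsect3.1} is forced because $e$ has holonomy in $\det^{-1}(\pm 1)$). These actions induce the standard M\"obius action on $\tau\in H^2$ and the corresponding holomorphic action on $z$, so $J$ descends to a globally well-defined almost complex structure on $M$, and with it $\omega$. Since $(G_{IJ})$ depends only on the base, the restriction of $\overline g$ to each fiber is a flat torus metric; together with the K\"ahlerness of $\overline g$ this is precisely the semiflat K\"ahler condition recalled in the references to Subsection~\ref{subsect3.3}.
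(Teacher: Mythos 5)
Your proposal is correct and follows essentially the same route as the paper, which (immediately after the corollary) exhibits the same local holomorphic coordinates $w=x^1+\tau x^2$ on the fibers, the same $(1,1)$-form built from $dw-\frac{w-\overline{w}}{\tau-\overline{\tau}}d\tau$ (your $\eta$), and the same local K\"ahler potential $\phi_B-\frac{1}{4\Im(\tau)}(w-\overline{w})^2$, all resting on Proposition \ref{prop3.6} ($\det G$ constant, $F=0$, $\widetilde{G}$ holomorphic). No substantive differences to report.
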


From (\ref{2.18}), the pullback of the Ricci tensor of $B$, to $\widetilde{B}$,
is $\frac14 \widetilde{G}^* g_{H^2}$.

To be more explicit about the semiflat K\"ahler metric, let $\tau = \tau(z)$
be a holomorphic map from an open set $U$ in $B$ to
the upper half plane. A corresponding map $(G_{IJ})$ from $U$
to symmetric matrices is
\begin{equation}
  G = 
\frac{1}{\Im(\tau)}
\begin{pmatrix}
  1 & \Re(\tau) \\
  \Re(\tau) & |\tau|^2
  \end{pmatrix}.
\end{equation}
The complex structure on $M$ can be defined by local holomorphic coordinates.
One local coordinate
is the pullback of a local holomorphic coordinate on $B$. The
other one is $w = x^1 + \tau x^2$.
The K\"ahler form corresponding to $\overline{g}$ is
\begin{equation}
  \overline{\omega} = \omega_B + \frac{i}{2 \Im(\tau)}
  \left( dw - \frac{w-\overline{w}}{\tau-\overline{\tau}} d\tau \right) \wedge
  \left( d\overline{w} - \frac{w-\overline{w}}{\tau-\overline{\tau}}
  d\overline{\tau} \right),
\end{equation}
where $\omega_B$ is the K\"ahler form for $g$.
If $\phi_B$ is a local K\"ahler potential for $\omega_B$, meaning that
$\omega_B = i \partial \overline{\partial} \phi_B$, then a local
K\"ahler potential for $\overline{\omega}$ is
\begin{equation}
\overline{\phi} = \phi_B - \frac{1}{4\Im(\tau)} (w - \overline{w})^2.
\end{equation}
Equation (\ref{2.18}) becomes the statement that the Ricci form on $B$ is
\begin{equation} \label{form}
  \Ric_B = \frac{i}{4\Im(\tau)^2} d\tau \wedge d\overline{\tau} =
 \frac{i|\tau^\prime(z)|^2}{4\Im(\tau)^2} dz \wedge d\overline{z}.  
\end{equation}

\begin{remark} \label{rem3.9}
  Although the metric completion of $B$ amounts to adding a finite number
  of points, it does not follow from this that $B$ is parabolic. For example,
  the domain $U = \{z \in \C : 1 < |z| < 2\}$ is nonparabolic.
  One can construct a metric $e^{2 \phi} g_{Eucl}$ on $U$ so that the
  metric completion consists of adding two points.
\end{remark}

\begin{example} \label{ex3.8}
  The paper \cite{Gross-Wilson (2000)} considers a collapsing sequence of Ricci-flat
  metrics on $K3$, for which $B$ is $S^2$ minus 24 points. In this case, $B$
  is parabolic.  The semiflat metric is described in
  \cite[Example 2.2]{Gross-Wilson (2000)}.

  To briefly summarize the geometry of the collapse, as taken from
  \cite{Gross-Wilson (2000)},
  during the collapse there are 24 Taub-NUT gravitational instantons
  that bubble
  off. A Taub-NUT gravitational instanton is of ALF type, i.e. has
  cubic volume growth.  It may not be evident how truncated ALF instantons
  get attached in the collapsing limit
  to the semiflat metric on a $T^2$-fibration over the
  complement of 24 balls in $S^2$, since there seems to be a discrepancy
  in the limiting dimensions (3 vs. 2).  This is the role of the
  (incomplete) Ricci-flat Ooguri-Vafa
  metric, which provides the approximate geometry over a ball around any of
  the 24 points in $S^2$, for
  the collapsing $K3$ manifold.
  The Ooguri-Vafa manifold contains an approximate (truncated and rescaled)
  Taub-NUT metric.  In effect, the Ooguri-Vafa manifold
  gives a Ricci-flat transition region as a cobordism between the boundary
  $3$-sphere of the truncated Taub-NUT metric, with the Hopf $S^1$-action,
  and the $\Nil$-manifold that lives over the boundary of the ball and
  has a twisted $T^2$-action.  This transition region carries a mixed
  $F$-structure in the sense of \cite{Cheeger-Gromov (1986)}.
  \end{example}

Now consider the case when isotropy groups are finite.
Then the orbit space
$B$ is an orbifold.  The results of this subsubsection
extend to the orbifold setting.

\subsubsection{Infinite isotropy groups} \label{subsubsect3.3.2}
Suppose now that some points in the unit space
of ${\mathcal X}$ have
isotropy group isomorphic to $\SO(2)$, and the other isotropy groups
are trivial. Then $B$ is a surface with boundary.
Consider the interior of $B$, i.e.  the subset of $B$ corresponding
to points in the unit space with trivial isotropy group. 

\begin{proposition} \label{prop3.10}
  $\det(G)$ extends continuously to be zero on
  $\partial B$.
  \end{proposition}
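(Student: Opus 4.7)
The plan is to lift the question to the unit space of $\mathcal{X}$ near a point with $\SO(2)$ isotropy, and recognize $\det G$ as the squared area $|X_1 \wedge X_2|^2$ of the parallelogram spanned by a natural basis of Killing vector fields of the local $\R^2$-symmetry. One of these Killing fields is forced to vanish on the fixed locus of $\SO(2)$, which is exactly what projects to $\partial B$, so this squared area extends continuously to zero.

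Concretely, I would fix $p \in \partial B$ and lift to a point $\tilde p$ in the unit space of $\mathcal{X}$ whose isotropy group is $\SO(2)$. On a saturated neighborhood $\widetilde U$ of $\tilde p$, the abelian Killing sheaf $\underline{\mathfrak g}$ has rank $2$. Let $\mathfrak h \subset \mathfrak g$ be the one-dimensional subalgebra of elements whose Killing field vanishes at $\tilde p$ (this is $\mathfrak h = \mathfrak{so}(2)$), and choose a basis $(X_1, X_2)$ of $\mathfrak g$ with $X_1 \in \mathfrak h$; then $X_1$ is the infinitesimal generator of the $\SO(2)$-action, has closed integral curves of period $2\pi$, and vanishes precisely on the codimension-two fixed submanifold $Z \subset \widetilde U$ whose image in the orbit space is the portion of $\partial B$ near $p$. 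The field $X_2$ can be taken nonvanishing on $\widetilde U$. In this basis one computes in the interior of $B$
\begin{equation*}
  \det G = |X_1|^2 |X_2|^2 - \langle X_1, X_2\rangle^2 = |X_1 \wedge X_2|^2,
\end{equation*}
which is a smooth non-negative function on $\widetilde U$ whose zero set is exactly $Z$. Descent to the orbit space then gives a continuous extension of $\det G$ by zero across the portion of $\partial B$ near $p$.

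The main point to verify is that the basis $(X_1, X_2)$ so chosen agrees, up to an element of $\det^{-1}(\pm 1) \subset \GL(2, \R)$, with the frame of $e$ implicit in the definition of $G$ on the interior, so that the local formula $|X_1 \wedge X_2|^2$ really does compute the globally defined $\det G$. This is automatic from the holonomy assumption on $e$ recalled in Subsection \ref{subsect3.1}; the underlying geometric picture is of Seifert type, in that one $S^1$-factor of the collapsing $T^2$-fibers of $M_i \to B$ shrinks to a point as one approaches $Z$ and produces the rotational Killing field $X_1$ in the limit, while the other $S^1$-factor persists as $X_2$. Once this matching is in hand, the proposition is immediate from the smoothness of the Killing fields and the metric on the unit space.
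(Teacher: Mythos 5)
Your argument is correct and is essentially the paper's own proof in invariant form: the slice-theorem parametrization $(b^1,b^2,x^1,x^2)\mapsto(b^1,x^1,b^2\cos x^2,b^2\sin x^2)$ used there is precisely a choice of your translational and rotational Killing basis, and the resulting asymptotic $G\sim\begin{pmatrix}1&0\\0&(b^2)^2\end{pmatrix}$ is your statement that the Gram determinant $|X_1\wedge X_2|^2$ vanishes where the rotational field does. The frame-matching point you flag at the end is moot for the conclusion, since changing the frame of $e$ rescales $\det G$ by a positive locally constant factor and so cannot affect whether it tends to zero.
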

\begin{proof}
  Let $m$ be a point in the unit space of ${\mathcal X}$ with isotropy
  group $\SO(2)$. From the slice theorem, a neighborhood of $m$ in
  the unit space is equivariantly diffeomorphic to a neighborhood of
  the origin in $\R \times (\R \times \C)$. Here the
  group action is
  on the $\R \times \C$ factor, with translations of the $\R$-term
  and rotations of the $\C$-term.  The points
  in $\R \times (\R \times \C)$ with trivial isotropy
  are $\R \times (\R \times \C^*)$.
  Their quotient by the group action
  is $\R \times (0, \infty)$, from which one obtains local coordinates for
  the part of the interior of $B$ approaching a boundary point.
  
The metric $\overline{g}$ is smooth in the coordinates given by
$\R \times (\R \times \C)$. To describe $\overline{g}$ in terms of the
setup of Subsection \ref{subsect2.1}, we use
the local parametrization of $\R \times (\R \times \C^*)$ by
$(b^1, b^2, x^1, x^2) \rightarrow (b^1, x^1, b^2 \cos(x^2), b^2 \sin(x^2))$,
where $b^2 > 0$. Then for $b^2$ small, the matrix $G$ is asymptotic to
$
\begin{pmatrix}
  1 & 0 \\
  0 & (b^2)^2
\end{pmatrix}
$. In particular, $\lim_{b^2 \rightarrow 0} \det(G) = 0$, showing that
$\det(G)$ vanishes on $\partial B$.
\end{proof}

Let $\Sigma$ be a two dimensional connected Riemannian manifold
with nonempty boundary.
We say that $\Sigma$
is parabolic if any $C^2$-regular subharmonic function on $\Sigma$
that vanishes on $\partial \Sigma$,
and is bounded above, must be zero.
(The reference \cite{Grigor'yan (1999)} instead imposes Neumann boundary
conditions.)

\begin{proposition} \label{prop3.11}
Suppose that $\det(G)$ is bounded above.
  Suppose that the isotropy groups of points in
  the unit space of ${\mathcal X}$ are trivial or isomorphic to $\SO(2)$,
  with not all of them being trivial. Then $B$ cannot be
  parabolic.
  \end{proposition}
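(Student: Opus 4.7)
The plan is to derive the required contradiction by applying the parabolicity hypothesis to the function $\sqrt{\det G}$ itself. The key observation is that all the relevant facts about this function have already been established elsewhere in the paper.

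First I would note that since ${\mathcal X}$ is Ricci-flat, we have $\lambda = 0$, so equation (\ref{2.9}) reduces on the interior of $B$ to
\begin{equation*}
\triangle \sqrt{\det G} \: = \: \tfrac{1}{4}\,|F|^2 \sqrt{\det G} \: \ge \: 0.
\end{equation*}
Thus $\sqrt{\det G}$ is a nonnegative, $C^2$-regular, subharmonic function on the interior of $B$. By the hypothesis that $\det G$ is bounded above, $\sqrt{\det G}$ is bounded above as well.

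Next I would appeal to Proposition \ref{prop3.10}: since not all isotropy groups are trivial, $\partial B$ is nonempty, and $\det G$ extends continuously to zero on $\partial B$. Hence $\sqrt{\det G}$ extends continuously to the closure of the interior of $B$ and vanishes on $\partial B$.

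Now I would invoke the definition of parabolicity for a surface with boundary stated just before the proposition. If $B$ were parabolic, the three properties above (nonnegative, subharmonic, vanishing on $\partial B$, bounded above) would force $\sqrt{\det G} \equiv 0$ on $B$. To get a contradiction, I would observe that on the interior of $B$ the isotropy groups are trivial, so the local ${\mathcal G}$-action is free; therefore $G$ is a positive-definite symmetric $N \times N$ matrix in the local expression (\ref{2.1}), and in particular $\det G > 0$ throughout the interior. This contradicts $\sqrt{\det G} \equiv 0$, so $B$ cannot be parabolic.

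The argument is essentially one line once Proposition \ref{prop3.10} is available; I do not expect any genuine obstacle. The only place that requires a moment's thought is verifying that $\partial B$ is nonempty under the stated hypotheses, which follows directly from the assumption that some isotropy groups are isomorphic to $\SO(2)$ (these give boundary points in $B$ via the slice-theorem analysis used in the proof of Proposition \ref{prop3.10}).
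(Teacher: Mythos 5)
Your proposal is correct and follows essentially the same route as the paper: both arguments combine the subharmonicity of $\sqrt{\det G}$ from (\ref{2.9}), the upper bound, the parabolicity hypothesis, and the boundary vanishing from Proposition \ref{prop3.10} to reach a contradiction with the positivity of $\det G$ on the interior. The only cosmetic difference is that you apply the boundary version of the parabolicity definition directly to conclude $\sqrt{\det G} \equiv 0$, while the paper phrases it as ``$\det G$ is a nonzero constant, contradicting Proposition \ref{prop3.10}''; these are the same argument.
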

\begin{proof}
  By assumption, $B$ is a smooth surface with nonempty boundary.
  If it is parabolic then as in Proposition \ref{prop3.6}, the function $\det(G)$
  is a nonzero constant.  This contradicts Proposition \ref{prop3.10}.
\end{proof}

Finally, suppose just that not all points in the unit space of
${\mathcal X}$ have a finite isotropy group.  Then
$B$ is a two dimensional orbifold-with-boundary, with a
nonempty boundary.  It makes sense to talk about $B$ being
parabolic.  Proposition \ref{prop3.11} has the following extension.

\begin{proposition} \label{prop3.12}
Suppose that $\det(G)$ is bounded above.
  Suppose that not all points in the unit space of
${\mathcal X}$ have a finite isotropy group.
Then $B$ cannot be
  parabolic.
  \end{proposition}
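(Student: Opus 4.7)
The plan is to reduce Proposition \ref{prop3.12} to the same two ingredients used in Proposition \ref{prop3.11}, namely the boundary behavior of $\det(G)$ from Proposition \ref{prop3.10} and the subharmonicity equation (\ref{2.9}), but now phrased on an orbifold-with-boundary. I would proceed by contradiction: assume $B$ is parabolic with $\partial B \neq \emptyset$ and $\det(G)$ bounded above, and derive $\sqrt{\det G} \equiv 0$.

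First I would extend Proposition \ref{prop3.10} to the orbifold setting. The statement is purely local along points of the unit space of $\mathcal{X}$ with $\SO(2)$-isotropy, and the slice-theorem argument used there only needs the local model $\R \times (\R \times \C)$ with the $\SO(2)$-action on the $\C$-factor. The presence of finite isotropy groups at other points of the unit space does not interact with this local analysis: those points contribute orbifold cone singularities in the interior of $B$, but the continuous vanishing of $\det(G)$ as one approaches the locus of $\SO(2)$-isotropy is unaffected. Thus $\det(G)$ still extends continuously by zero to $\partial B$ in the orbifold case.

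Next I would invoke (\ref{2.9}), which in the Ricci-flat case $\lambda = 0$ reads
\begin{equation*}
\triangle \sqrt{\det G} \: = \: \tfrac{1}{4} \: |F|^2 \: \sqrt{\det G} \: \ge \: 0,
\end{equation*}
so $\sqrt{\det G}$ is a nonnegative, subharmonic, $C^2$-regular function on the interior of $B$; this equation passes to the orbifold setting verbatim, as noted at the beginning of Section \ref{sect2}. By the boundedness hypothesis, $\sqrt{\det G}$ is bounded above, and by the extended Proposition \ref{prop3.10} it vanishes continuously on $\partial B$.

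Finally, the definition of parabolicity for an orbifold-with-boundary demands that any such function be identically zero. This forces $\sqrt{\det G} \equiv 0$ on $B$, contradicting the fact that $\sqrt{\det G} > 0$ at every point of the interior of $B$ (where the groupoid has at worst finite isotropy and the fiber volume is a positive smooth quantity). The main obstacle I foresee is purely definitional: one must check that the notion of parabolicity used just before Proposition \ref{prop3.11} sensibly extends from smooth surfaces-with-boundary to two dimensional orbifolds-with-boundary, and that the subharmonic maximum-principle argument underlying it is insensitive to the finite orbifold singularities in the interior. Once this is granted, the proof is a one-line combination of Proposition \ref{prop3.10} (orbifold version) with the sign of $|F|^2$ in (\ref{2.9}).
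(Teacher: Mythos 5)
Your proposal is correct and follows essentially the same route as the paper: the paper proves Proposition \ref{prop3.11} by combining the boundary vanishing of $\det(G)$ from Proposition \ref{prop3.10} with the subharmonicity of $\sqrt{\det G}$ from (\ref{2.9}) and the parabolicity hypothesis, and then states Proposition \ref{prop3.12} as the direct orbifold extension of that argument, exactly as you describe. The only cosmetic difference is that the paper phrases the contradiction as ``parabolicity forces $\det(G)$ to be a nonzero constant, contradicting its vanishing on $\partial B$,'' whereas you apply the with-boundary definition of parabolicity directly to conclude $\sqrt{\det G}\equiv 0$ and contradict interior positivity; these are the same argument.
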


\begin{example}
Suppose that $\det(G)$ is bounded above and
$B$ is compact. Then it is a compact orbifold-with-boundary and hence
is parabolic.
From Propositions \ref{prop3.6} and \ref{prop3.12},
and (\ref{form}),
$B$ is a boundaryless orbifold with nonnegative scalar curvature. Hence
the orbifold universal cover $\widetilde{B}$ is either isometric to $\R^2$ or
has underlying topological space
$S^2$ with zero, one or two orbifold singular points.
In either case, the holomorphic map
$\widetilde{G} : \widetilde{B} \rightarrow H^2$ must be
a constant map.  Then (\ref{form}) implies that $B$ is flat,
so $\widetilde{B} = \R^2$.
In conclusion, the Riemannian groupoid ${\mathcal X}$ is flat.
\end{example}

\subsection{One dimensional limit space} \label{subsect3.4}

Suppose that $\dim(B) = 1$.
Then $B$ is a circle or an interval (possibly open, closed or half-closed).

The sheaf $\underline{\mathfrak g}$ is a sheaf of three dimensional nilpotent
Lie algebras.
Over the interior of $B$,
for large $i$, the corresponding subset of $M_i$ is
a fiber bundle, whose fiber is a three dimensional infranilmanifold.
By \cite[p. 291]{Lott (2002)}, after passing to a finite cover we can
assume that the fiber is a nilmanifold. (This is obvious when $B$ is
an interval.)

Suppose first that the Lie algebra ${\mathfrak g}$ is abelian.
Then the fiber is a $3$-torus.
If $B$ is a circle then from Proposition \ref{prop2.19},
$(G_{IJ})$ is locally constant and the monodromy of $e$ around $B$ is
orthogonal. In particular, $\overline{g}$ is flat.
If $B$ is an interval then from Proposition \ref{prop2.19}, the metric on the
preimage of the interior of $B$, in the unit
space of ${\mathcal X}$, 
is locally isometric to
\begin{equation} \label{3.13}
ds^2 + (dx^1)^2 + (dx^2)^2 + (dx^3)^2
\end{equation}
or
\begin{equation} \label{3.14}
ds^2 + s^{2p_1} (dx^1)^2 +  s^{2p_2} (dx^2)^2 +  s^{2p_3} (dx^3)^2,
\end{equation}
where $p_1 + p_2 + p_3 = p_1^2 + p_2^2 + p_3^2 = 1$.
The metric (\ref{3.14}) is a Riemannian version of the Ricci-flat Kasner
metric from general relativity
\cite[Section 9.1.1]{Ellis-Wainwright (1997)}.

The other possibility for ${\mathfrak g}$ is the three dimensional Heisenberg
Lie algebra. In the Lorentzian case, such Ricci-flat metrics are called
Bianchi type II and are discussed in 
\cite[Section 6.3.2]{Ellis-Wainwright (1997)}. An explicit solution is the
Taub vacuum metric \cite[Section 9.2.1]{Ellis-Wainwright (1997)}.
In the Riemannian case, the corresponding Bianchi II metrics are described in
\cite[Section 4.1]{Dunajski-Tod (2017)} and
\cite[Section 4.2]{Terzis-Christodoulakis (2012)}.
One explicit solution is the
Gibbons-Hawking metric that appears in
\cite[Section 2.2]{Hein-Sun-Viaclovsky-Zhang (2018)}.

\begin{example} \label{addedexample}
  There are Ricci-flat $K3$ manifolds that collapse to a closed interval,
  for which the regular
  regions approach a flat four dimensional geometry over an open interval
  \cite[Section 5]{Chen-Chen (2016)}. During the collapse, an
  ALH gravitational instanton bubbles off from each end of the interval.

  There are also Ricci-flat $K3$ manifolds that collapse to a closed interval,
  with the regular regions collapsing to two open intervals, so that
  over each of these intervals,
  the geometries of the regular
  regions approach a Riemannian Bianchi-II geometry
  \cite{Hein-Sun-Viaclovsky-Zhang (2018)}.
  The construction uses the fact that in the four dimensional case,
  the Ricci-flat metrics constructed
  in \cite[Theorem 4.1]{Tian-Yau (1990)}
  have an asymptotic geometry of Riemannian Bianchi-II
  type.

  We do not know if there are almost Ricci-flat metrics that collapse
  to an interval, for which the regular
  regions approach Riemannian Kasner geometries over open intervals.
\end{example}

\subsection{Zero dimensional limit space} \label{subsect3.5}

If ${\mathcal O}$ is a point then the unit space of ${\mathcal X}$ is
locally homogeneous with a Ricci-flat metric.  Hence it must be flat
\cite{Spiro (1993)}.


\begin{thebibliography}{10}

\bibitem{Anderson (1990)} M. Anderson,
  ``Convergence and rigidity of manifolds under Ricci curvature bounds'',
  Invent. Math. 102, p. 429-445 (1990)

\bibitem{Anderson (1992)} M. Anderson, ``Hausdorff perturbations of
  Ricci-flat manifolds and the splitting theorem'', Duke Math. J. 68, p.
  67-82 (1992)
  
\bibitem{Anderson-Cheeger (1991)} M. Anderson and J. Cheeger,
  ``Diffeomorphism finiteness for manifolds with Ricci curvature and
  $L^{n/2}$-norm of curvature bounded'', Geom. Funct. Anal.
  1, p. 231–252 (1991)
  
\bibitem{Bando (1990)} S. Bando, ``Bubbling out of Einstein manifolds'',
  Tohoku Math. J. 42, p. 205–216 (1990)

\bibitem{Bando-Kasue-Nakajima (1989)} S. Bando, A. Kasue and H. Nakajima,
  ``On a construction of coordinates
  at infinity on manifolds with fast curvature decay and maximal volume
  growth'', Invent.
  Math. 97 (1989), p. 313–349 (1989)
  
\bibitem{Brendle-Kapouleas (2017)} S. Brendle and N. Kapouleas,
  ``Gluing Eguchi-Hanson metrics and a question of Page'',
  Comm. Pure Appl. Math 70, p. 1366-1401 (2017)

\bibitem{Calderbank-Pedersen (2002)} D. Calderbank and H. Pedersen,
  ``Selfdual Einstein metrics with torus symmetry'', J. Diff. Geom. 3,
  p. 485-521 (2002)

\bibitem{Cheeger-Fukaya-Gromov (1992)} J. Cheeger, K. Fukaya and M. Gromov,
  ``Nilpotent structures and invariant metrics on collapsed manifolds'',
  J. of the Amer. Math. Soc. 5, p. 327-372 (1992)

\bibitem{Cheeger-Gromov (1986)} J. Cheeger and M. Gromov,
  ``Collapsing Riemannian manifolds while keeping their curvature bounded I'',
  J. Diff. Geom. 23, p. 309-346 (1986)

\bibitem{Cheeger-Naber (2015)} J. Cheeger and A. Naber,
  ``Regularity of Einstein manifolds and the codimension $4$ conjecture''
  Ann. Math. 182, p. 1093-1165 (2015)
  
  \bibitem{Cheeger-Tian (2006)} J. Cheeger and G. Tian,
    ``Curvature and injectivity radius estimates for Einstein 4-manifolds'',
    J. Amer. Math. Soc. 19, p. 487-525 (2006) 

    \bibitem{Chen-Chen (2016)} G. Chen and X. Chen,
    ``Gravitational instantons with faster than
    quadratic curvature decay (III)'', preprint,
    https://arxiv.org/abs/1603.08465 (2016)
  
    
  \bibitem{Dammerman (2004)} B. Dammerman, ``Metrics of special curvature
    with symmetry'', preprint, https://arxiv.org/pdf/math/0610738.pdf
    (2004)

  \bibitem{Dunajski-Tod (2017)} M. Dunajski and P. Tod,
    ``Non-diagonal four-dimensional cohomogeneity-one Einstein metrics in various signatures'', Diff. Geom. and its Appl. 54, Part A, p.11-30 (2017)
    
  \bibitem{Ellis-Wainwright (1997)} G. Ellis and J. Wainwright, eds,
    \underline{Dynamical systems in cosmology}, Cambridge University Press,
    Cambridge (1997)
    
  \bibitem{Foscolo (2016)} L. Foscolo,
    ``ALF gravitational instantons and collapsing Ricci-flat metrics on the K3 surface'', to appear, J. Diff. Geom.
  
\bibitem{Grigor'yan (1999)} A. Grigoryan, 
``Analytic and geometric background of recurrence and non-explosion of the Brownian motion on Riemannian manifolds'',
Bull. Amer. Math. Soc. 36, p. 135-249 (1999)

\bibitem{Gross-Tosatti-Zhang (2013)} M. Gross. V. Tosatti and Y. Zhang,
  ``Collapsing of abelian fibered Calabi-Yau manifolds'',
  Duke Math J. 162, p. 517-551 (2013)

\bibitem{Gross-Wilson (2000)} M. Gross and P. Wilson,
  ``Large complex structure limits of $K3$ surfaces'',
  J. Diff. Geom. 55, p. 475-546 (2000) 

\bibitem{Hein (2012)} H.-J. Hein,
``Gravitational instantons from rational elliptic surfaces'',
J. Amer. Math. Soc. 25, p. 355-393 (2012)

\bibitem{Hein-Sun-Viaclovsky-Zhang (2018)} H.-J. Hein,
  S. Sun, J. Viaclovsky and R. Zhang,
  ``Nilpotent structures and collapsing Ricci-flat metrics on $K3$ surfaces'',
  preprint, https://arxiv.org/abs/1807.09367 (2018)

\bibitem{Hein-Tosatti (2015)} H.-J. Hein and V. Tosatti,
  ``Remarks on the collapsing of torus fibered Calabi–Yau manifolds'',
  Bull. London Math. Soc. 47, p. 1021-1027 (2015)

\bibitem{Kapovitch-Lott (2016)} V. Kapovitch and J. Lott,
  ``On noncollapsed almost Ricci-flat 4-manifolds'',
   to appear, Amer. J. Math.

\bibitem{LeBrun (1999)} C. LeBrun,
  ``Kodaira dimension and the Yamabe problem'', Comm. Anal. Geom. 7,
  p. 133-156 (1999)
  
\bibitem{Li (2010)} Y. Li,
  ``Smoothing Riemannian metrics with bounded Ricci curvatures in dimension four'', Adv. Math. 223, p. 1924-1957 (2010)

\bibitem{Lott (2002)} J. Lott,
  ``Collapsing and the differential form Laplacian : the case of a smooth
  limit space'', Duke Math. J. 114, p. 267-306 (2002)
  
\bibitem{Lott (2007)} J. Lott, ``On the long-time behavior of type-III
Ricci flow solutions'', Math. Ann. 339, p. 627-666 (2007)

\bibitem{Lott (2010)} J. Lott, ``Dimensional reduction and the
long-time behavior of Ricci flow'', Comm. Math. Helv. 85, p. 485-534 (2010)

\bibitem{Naber-Tian} A. Naber and G. Tian,
``Geometric structures of collapsing Riemannian manifolds II'',
J. f\"ur die Reine und Angew. Math. (Crelle's journal) 744, p. 103-132 (2018)

\bibitem{Spiro (1993)} A. Spiro, ``A remark on locally homogeneous
  Riemannian spaces'', Results Math. 24, p. 318-325 (1993)

\bibitem{Terzis-Christodoulakis (2012)}
  P. Terzis and T. Christodoulakis, ``Lie algebra automorphisms as
  Lie point symmetries and the solution
  space for Bianchi Type I, II, IV, V vacuum geometries'', Class. Quant.
  Grav. 29, 235007 (2012)
  
\bibitem{Tian (1990)} G. Tian,
  ``On Calabi's conjecture for complex surfaces with positive first Chern
  class'', Invent. Math. 101, p. 101-172 (1990)

\bibitem{Tian-Yau (1990)} G. Tian and S.-T. Yau,
  ``Complete K\"ahler manifolds with zero Ricci curvature I'',
  J. Amer. Math. Soc. 3, p. 579-609 (1990)
  
\bibitem{Tosatti-Zhang (2017)} V. Tosatti and Y. Zhang,
  ``Collapsing hyperK\"ahler manifolds'', to appear,
  Ann. Sci. \'Ec. Norm. Sup.

\bibitem{Wang (2012)} M. Wang, ``Einstein metrics from symmetry and bundle
  constructions: a sequel'', in
  \underline{Differential geometry: under the influence of S.-S. Chern},
  Advanced Lectures in Math. 252, Higher Education Press/International
  Press, p. 253-309 (2012)
\end{thebibliography}
\end{document}